\DeclarePairedDelimiter\floor{\lfloor}{\rfloor}
\newtheorem{theorem}{Theorem}[section]
\newtheorem{corollary}[theorem]{Corollary}
\newtheorem{definition}[theorem]{Definition}
\newtheorem{example}[theorem]{Example}
\newtheorem{proposition}[theorem]{Proposition}
\newtheorem{remark}[theorem]{Remark}
\newsavebox{\hold}
\newlength{\holdht}
\newcommand{\R}{\mathbb{R}}
\begin{document}

\title[Discrete and semimetric fixed point theorems]{On discrete and semimetric fixed point theorems for nonexpansive mappings}

\author[A. El adraoui, M. Kabil, A. Kamous and S. Lazaiz]{A. El adraoui$^1$, M. Kabil$^2$, A. Kamous$^3$ and S. Lazaiz$^3$}

\address{$^1$ Laboratory of Analysis Modeling and Simulation, Department of Mathematics and Computer Sciences\\ Faculty of Sciences Ben M'sick\\ University Hassan II, Casablanca, Morocco\\}
\address{$^2$Laboratory of Mathematics, Computer Science and Applications,\\ Faculty of Sciences and Technologies Mohammedia,\\ Hassan II University  of Casablanca, Morocco.\\}
\address{$^3$ENSAM Casablanca, Hassan II University, Casablanca, Morocco \\}

\email{samih.lazaiz@usmba.ac.ma}
\subjclass[2010]{37C25, 06A75, 47H09, 47H10, 08A02}

\keywords{Binary relation; fixed point; Inframetric; Ultrametric} 

\begin{abstract}
This work is a comparative study between the existence of fixed point for homomorphisms in a class of binary relationnal systems and   the existence of fixed point for nonexpansive mappings in semimetric spaces.
\end{abstract}

\maketitle
\section{Introduction}
The classical fixed point theorems of Banach and Tarski
marked the development of the two most prominent facets of the theory, namely, the metric fixed
point theory and the discrete fixed point theory.
\begin{definition}\rm\cite{granas2004}
A map $T:(X,d) \rightarrow (Y,\rho)$ of metric spaces that satisfies $\rho(T(x), T(z))\leq k\;d(x, z)$ for some nonnegative constant $k$ and all $x, z \in X$ is called Lipschitzian and 
the smallest such $k$ is called the Lipschitz constant. If $k < 1$,
the map $T$ is called contractive with contraction constant $k$.
\end{definition}
\begin{theorem} \rm{(Banach Contraction Principle (BCP))}\\
  Let $(X, d)$ be a complete
metric space and $T:(X,d) \rightarrow (X,d)$ be a contractive map. Then $T$ has a unique
fixed point $x_0$ and for every $x\in X$, the sequence $\{T^n(x)\}_{n\in \mathbb{N}}$ converges to $x_0$.
\end{theorem}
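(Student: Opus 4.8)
The plan is to produce the fixed point as the limit of a Picard iteration and to lean on completeness to guarantee that this limit exists. First I would fix an arbitrary starting point $x \in X$ and set $u_n = T^n(x)$, so that $u_0=x$ and $u_{n+1} = T(u_n)$ for all $n$. The whole argument then reduces to proving that $\{u_n\}_{n\in\mathbb{N}}$ is a Cauchy sequence.

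The engine driving this is the contraction inequality $d(T(a),T(b)) \le k\, d(a,b)$ with $k<1$. A straightforward induction yields the decay estimate $d(u_{n+1},u_n)\le k^n\, d(u_1,u_0)$. Summing consecutive gaps via the triangle inequality then gives, for $m>n$,
\[
  d(u_m,u_n) \le \sum_{i=n}^{m-1} d(u_{i+1},u_i) \le \Big(\sum_{i=n}^{m-1} k^i\Big) d(u_1,u_0) \le \frac{k^n}{1-k}\, d(u_1,u_0).
\]
Because $0\le k<1$, the right-hand side tends to $0$ as $n\to\infty$, so $\{u_n\}$ is Cauchy; completeness of $(X,d)$ then supplies a limit $x_0\in X$.

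It remains to identify $x_0$ as a fixed point and to establish uniqueness. Since a contractive map is Lipschitzian, hence continuous, passing to the limit in $u_{n+1}=T(u_n)$ gives $x_0=T(x_0)$. For uniqueness, suppose $y_0$ is another fixed point; then $d(x_0,y_0)=d(T(x_0),T(y_0))\le k\, d(x_0,y_0)$, and since $k<1$ this forces $d(x_0,y_0)=0$, i.e.\ $x_0=y_0$. Finally, because the starting point $x$ was arbitrary and the argument above shows every such iteration converges to a fixed point, uniqueness guarantees that $\{T^n(x)\}$ converges to the same $x_0$ for every $x\in X$.

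I expect the main technical step to be the Cauchy estimate displayed above: everything else is essentially bookkeeping once the geometric-series bound is in place. It is precisely the strict inequality $k<1$ that makes the tail $k^n/(1-k)$ vanish and that simultaneously forces uniqueness, so this is the hypothesis on which the whole proof turns.
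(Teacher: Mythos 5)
Your proof is correct: it is the standard Picard-iteration argument, with the geometric-series Cauchy estimate, completeness supplying the limit, continuity of $T$ identifying it as a fixed point, and the contraction inequality forcing uniqueness. The paper itself states the Banach Contraction Principle only as classical background and gives no proof, so there is nothing in the paper to compare against; your argument is exactly the canonical one any reference (e.g.\ Granas--Dugundji) would give, and all steps, including the final observation that uniqueness makes every orbit converge to the same point, are sound.
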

Nonexpansive mappings are those maps which have Lipschitz constant equal to one. These mappings can obviously be viewed as a natural extension of contraction mappings. However, the fixed point problem for nonexpansive mappings differ sharply from that of the contraction mappings in the sense that additional structure of the domain set is needed to insure the exsistence of fixed points.
The first existence results for nonexpansive mappings were obtained in 1965, independently,  by Browder \cite{browder1965}, G\"{o}hde \cite{gohde1965}. They proved that Every nonexpansive self-mapping of a closed convex and bounded
subset of a uniformly convex Banach space has a fixed point. Under assumptions slightly weaker, Kirk established the following result.
\begin{theorem}\cite{kirk1965}
  Let $C$ be a nonempty, bounded, closed and convex subset of a reflexive
Banach space X, and suppose that $C$ has normal structure. If $T$ is a mapping of $C$
into itself which does not increase distances, then $T$ has a fixed point. 
\end{theorem}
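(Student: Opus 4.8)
The plan is to combine a Zorn's lemma argument that produces a minimal $T$-invariant set with the geometric input of normal structure, which forces that minimal set to be a single point. The key topological fact I would use at the outset is that reflexivity of $X$ makes the bounded, closed, convex set $C$ weakly compact, and that every closed convex subset of $C$ is weakly closed, hence again weakly compact. With this in hand, consider the family $\mathcal{F}$ of all nonempty, closed, convex sets $F \subseteq C$ with $T(F) \subseteq F$, ordered by reverse inclusion. It contains $C$, and any chain in $\mathcal{F}$ consists of weakly compact sets with the finite intersection property, so its intersection is again a nonempty closed convex $T$-invariant set, i.e. an upper bound. Zorn's lemma then yields a minimal element $K \in \mathcal{F}$.

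Next I would extract two consequences of minimality. First, since $T(K) \subseteq K$ and $K$ is closed and convex, $\overline{\mathrm{co}}(T(K)) \subseteq K$, and applying $T$ gives $T(\overline{\mathrm{co}}(T(K))) \subseteq T(K) \subseteq \overline{\mathrm{co}}(T(K))$; thus $\overline{\mathrm{co}}(T(K)) \in \mathcal{F}$ and minimality forces $\overline{\mathrm{co}}(T(K)) = K$. Second, I introduce the Chebyshev machinery: for $x \in K$ put $r(x) = \sup_{y \in K}\|x - y\|$, let $r_K = \inf_{x \in K} r(x)$, and let $Z = \{x \in K : r(x) = r_K\}$. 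Since $r$ is convex and weakly lower semicontinuous and $K$ is weakly compact, $Z$ is nonempty, closed and convex. To show $T(Z) \subseteq Z$, fix $x \in Z$: nonexpansiveness gives $\|T(x) - T(y)\| \le \|x - y\| \le r_K$ for all $y \in K$, so $T(K)$ lies inside the closed ball $B(T(x), r_K)$; this ball is closed and convex, so it contains $\overline{\mathrm{co}}(T(K)) = K$, which says precisely $r(T(x)) \le r_K$, i.e. $T(x) \in Z$. Hence $Z \in \mathcal{F}$ and minimality gives $Z = K$.

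To finish, note $Z = K$ means $r(x) = r_K$ for every $x \in K$, so $\|x - y\| \le r_K$ for all $x, y \in K$ and therefore $\mathrm{diam}(K) = r_K$. If $K$ had more than one point, normal structure would provide a nondiametral point $x_0 \in K$ with $r(x_0) < \mathrm{diam}(K) = r_K$, contradicting $r(x_0) = r_K$. Hence $K = \{x_0\}$ is a singleton, and $T(K) \subseteq K$ forces $T(x_0) = x_0$.

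I expect the main obstacle to be the $T$-invariance of the Chebyshev centre. It is tempting to argue it directly from nonexpansiveness, but that only bounds the distance from $T(x)$ to points of $T(K)$, not to all of $K$; the decisive point is that minimality upgrades $T(K)$ to $\overline{\mathrm{co}}(T(K)) = K$ and that a closed ball is convex, so controlling $T(K)$ automatically controls $K$. Reflexivity, through weak compactness, is what guarantees both the nonemptiness of the Chebyshev centre and the existence of the minimal set, so it cannot be dispensed with in this approach.
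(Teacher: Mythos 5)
Your proof is correct, and it is essentially Kirk's original argument (the paper itself states this theorem only as a cited classical result, with no proof of its own): reflexivity gives weak compactness, Zorn's lemma gives a minimal invariant set, the identity $\overline{\mathrm{co}}(T(K)) = K$ makes the Chebyshev-type set $T$-invariant, and normal structure collapses $K$ to a point. Your closing remark correctly identifies the one genuinely delicate step, namely that nonexpansiveness alone only controls $T(K)$ and minimality is what upgrades this control to all of $K$.
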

 In 1979, Penot \cite{ref10} generalizes Kirk's result in a metric space introducing a new convexity structure notion. In the setting of bounded hyperconvex metric spaces, Sine and Soardi \cite[1979]{sine1979,soardi1979}  proved that every non-expansive mapping has a fixed point.\vskip 2mm
Parallel to this development and after a series of research carried out by Knaster and Tarski between 1927 and 1955, the latter was able to establish this famous result of the fixed point for monotonic mappings on a poset:
\begin{theorem}\cite{tarski1955}
   Let $(E,\leq)$ be a complete lattice. If $T:E\rightarrow E$ is an order-preserving mapping, then $T$ admits a fixed point in $E$
\end{theorem}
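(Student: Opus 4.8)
The plan is to exhibit a fixed point explicitly as the supremum of a carefully chosen set, using only the completeness of the lattice together with the monotonicity of $T$. First I would introduce the set
\[
S = \{x \in E : x \leq T(x)\}
\]
of points lying below their own image. This set is nonempty: since $(E,\leq)$ is a complete lattice it possesses a least element $\bot = \inf E$, and $\bot \leq T(\bot)$ holds trivially, so $\bot \in S$. By completeness the supremum $u = \bigvee S$ exists in $E$, and $u$ is the natural candidate for a fixed point.

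The heart of the argument is to establish $u = T(u)$ by proving the two inequalities separately. For $u \leq T(u)$, I would take an arbitrary $x \in S$; then $x \leq u$, so monotonicity gives $T(x) \leq T(u)$, and combining with $x \leq T(x)$ yields $x \leq T(u)$. Hence $T(u)$ is an upper bound of $S$, and since $u$ is the \emph{least} upper bound we obtain $u \leq T(u)$.

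For the reverse inequality $T(u) \leq u$, I would apply $T$ to the relation $u \leq T(u)$ just obtained; monotonicity gives $T(u) \leq T(T(u))$, which says precisely that $T(u) \in S$. Since $u$ is an upper bound of $S$, it follows that $T(u) \leq u$. Combining the two inequalities forces $u = T(u)$, so $u$ is a fixed point of $T$ (in fact the greatest one).

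The proof is short and each individual step is routine once $S$ is in hand; the main conceptual obstacle is identifying this set and recognizing the self-improving feature that $u \leq T(u)$ pushes $T(u)$ back into $S$, which is exactly what upgrades the supremum from a mere post-fixed point to a genuine fixed point. No iteration scheme or chain condition is required---the completeness of the lattice supplies the supremum that does all the work.
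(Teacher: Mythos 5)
Your proof is correct and complete: the set $S=\{x\in E : x\leq T(x)\}$, its supremum $u$, and the two inequalities $u\leq T(u)$ and $T(u)\leq u$ are exactly the classical Knaster--Tarski argument. The paper itself gives no proof of this statement (it simply cites Tarski's 1955 paper), and your argument coincides with the standard proof found in that reference, so there is nothing further to compare.
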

  
The notable comparative study between the metric fixed point theory and the discrete one finds its roots in the works of Pouzet and his collaborators \cite{ref6,kabil16,kabil14,ref1}. In 1986, Jawhari et al. \cite{ref6} considered  a
generalized metric space , where, instead of real numbers, the distance values are members
of an ordered monoid equipped with an involution satisfying a distributivity condition. They showed that Tarski's classical fixed point theorem is closely related to Sine-Soardi theorem. Indeed, complete lattices may be seen as  hyperconvex metric spaces. 
\vskip 2mm
 In \cite{ref1}, Khamsi and Pouzet established an analogy between metric spaces and
binary relational systems. By extending  Penot's notions, they gave an abstract version of Kirk's result  in this framework. In \cite{Adra}, El adraoui et al. considered a generalized metric space $(E, d)$, where the metric takes their values in a binary relational system verifying some special conditions. Using a  generalization of the constructive lemma due to Gillespie and Williams
\cite{Gillepsie}, they proved that if the space $(E, d)$ has compact and normal
structure, then every nonexpansive mapping has a fixed point. They
obtained Tarski's fixed point theorem as a corollary. 
\vskip 2mm
In the same spirit,
Jachymski \cite{ref2} showed that Eilenberg theorem is equivalent to Banach Contraction Principle (BCP) restricted to ultrametric bounded spaces. Moreover, he gave an extension of Eilenberg's theorem and showed that this extension implies (BCP). At the end of his paper, Jachymski raised the problem of finding a discrete result that is equivalent to the (BCP).In \cite{ref3}, the authors showed that the given extension is equivalent to (BCP). 

\medskip
In this work, we propose a study of the fixed point results for nonxpansive mappings and for relational homomorphisms from this point of view, that is a comparative study between metric fixed point results and discrete fixed point results. For this purpose, we use the ideas developed by Pouzet in these various papers (see for instance \cite{ref6,ref7,ref1}).

\section{Preleminaries}
Let $M$ be a nonempty set. A binary relation on $M$ is any subset $R$ of $M\times M$. The inverse of $R$ is the binary relation $R^{-1}:= \{(x, y) : (y, x) \in R\}$. The diagonal is $\Delta := \{(x, x) : x \in M\}$. The relation $R$ is symmetric if $R = R^{-1}$. $R$ is reflexive if $\Delta\subset R$. 

\medskip
Throughout this paper, we consider a binary relational system $\mathcal{E}=\{R_n\}_{n\in \mathbb{Z}}$  over $M$ satsfying the following properties: 
\begin{enumerate}[($r_1$)]
  \item $R_n$ is symmetric for all $n\in \mathbb{Z}$;
  \item $R_n \subseteq R_{n-1}$ for all $n\in \mathbb{Z}$;
  \item $\bigcup_{n\in \mathbb{Z}} R_n = M\times M$;
  \item $\bigcap_{n\in\mathbb{Z}} R_n = \Delta$.
\end{enumerate}

\begin{example}\label{ex1}\rm
Let $M=\mathbb{R}$ and set for each $n\in \mathbb{Z}$, 
$$R_n=\left\{(x,y):|x-y|\leq 1/2^n\right\}.$$
Then $\{R_n\}_{n\in \mathbb{Z}}$ satisfies $(r_1)-(r_4)$. 
\end{example}
Let $x\in M$, the $\mathcal{E}$-ball of center $x$ and radius $R_n$, is the set $$\mathcal{B}(x,R_n) = \{y\in M\; : \; (x, y)\in R_n\}.$$

For a subset $A$ of $M$, set 
\[
cov_\mathcal{E}\left(A\right)=\bigcap\left\{ B\left(x,R_n\right)\;:\;x\in A,\; A\subseteq B\left(x,R_n\right)\right\}. 
\]

We will say that $A$ is an $\mathcal{E}$-admissible set if $A=cov_\mathcal{E}\left(A\right)$, that is an intersection of $\mathcal{E}$-balls. The family of all $\mathcal{E}$-admissible subsets of $M$ will be denoted by $\mathcal{A}_\mathcal{E}\left(M\right)$.

\begin{definition}\rm
 
\begin{enumerate}
  \item[$(r_5)$] A subset $K$ of $M$ is said to be bounded if there exists $N\in \mathbb{Z}$ such that $K\times K\subset R_N$.
  \item[$(r_6)$] $(M,\mathcal{E})$ has a compact structure if $\mathcal{A}_\mathcal{E}\left(M\right)$ has the finite intersection property.
  \item[$(r_7)$] $(M,\mathcal{E})$ has the normal structure if any $A\in \mathcal{A}_\mathcal{E}\left(M\right)$ not reduced to one point, we have $r_\mathcal{E}(A) \neq \delta_\mathcal{E}(A)$.
\end{enumerate}
\end{definition}

\begin{example}\label{ex2}\rm
Let $M$ and $R_n$ be as Example \ref{ex1}, then $(M,d)$ has compact and normal structure.

Let $M=\mathbb{R}$ and set for each $n\in \mathbb{Z}$, 
$$R_n=\left\{(x,y):|x-y|\leq 1/2^n\right\}$$
and $\mathcal{E}:=\{R_n\}_{n\in\mathbb{Z}}$. Then $(M,\mathcal{E})$ has a compact normal structure. Indeed,

\begin{enumerate}[(i)]
  \item $R_n$ is symmetric for all $n\in \mathbb{Z}$.
  
  \medskip
  \item \textbf{Compact structure}. Let $\mathbb{B}_\mathcal{E}=\{\mathcal{B}(x,R_n):\,x\in E,\;n\in \mathbb{Z}\}$ where $$\mathcal{B}(x,R_n)=\{y\in E: (x,y)\in R_n\}.$$ 
  Note that :
  $$ \mathcal{B}(x,R_n)=B(x,1/2^n)=\left[x-1/2^n,x+1/2^n\right].$$
  
  Let $\mathcal{F}=\{B_i=[x_i-1/2^{n_i},x_i+1/2^{n_i}]\}_{i\in I}$ be a family of members of $\mathbb{B}_\mathcal{E}$ for some index set $I$. We have to show that :
  $$\forall J\subset I\;\text{finite}, \quad \bigcap_{j\in J} B_j\neq \varnothing \Rightarrow \bigcap_{i\in I} B_i\neq \varnothing.$$

Suppose that for all $J\subset I$ finite, $\bigcap_{j\in J} B_j\neq \varnothing$. Thus, each $x_l+1/2^{n_l}$ must be greater than $x_i-1/2^{n_i}$ for all $i,l\in I$. Consequently, $\{x_i-1/2^{n_i}\}_{i\in I}$ is majorized and $\{x_i+1/2^{n_i}\}_{i\in I}$ is minorized, thus $a=\sup \;\{x_i-1/2^{n_i}\}_{i\in I}$ and $b=\inf \;\{x_i+1/2^{n_i}\}_{i\in I}$ exist and we have $a\leq b$. So 
\begin{equation}\label{eq1}
\varnothing \neq [a,b] \subset \bigcap_i B_i.  
\end{equation}

\medskip
 \item \textbf{Normal structure}. Let $A=\bigcap_{i\in I} B_i$ not reduced to one point. In fact the inclusion in equation (\ref{eq1}) is an equality. Indeed, let $x\in A$ then 
 $$x_i-\frac{1}{2^{n_i}}\leq x \leq x_i+\frac{1}{2^{n_i}}$$
 for each $i\in I$. Thus $a\leq x \leq b$. Hence $A=[a,b]$. Let $m$ be in $\mathbb{Z}$ such that 
 \begin{equation}
   m\leq \frac{\log\left(\frac{2}{b-a}\right)}{\log(2)}=1+\frac{\log\left(\frac{1}{b-a}\right)}{\log(2)}.
 \end{equation}
 
 Then, we get 
 $$A\subseteq \left[\frac{a+b}{2}-\frac{1}{2^{m}},\frac{a+b}{2}+\frac{1}{2^{m}}\right]=B\left(\frac{a+b}{2},\frac{1}{2^{m}}\right)=\mathcal{B}\left(\frac{a+b}{2},R_m\right).$$
 
 Set $\alpha=\frac{\log\left(\frac{1}{b-a}\right)}{\log(2)}$ and choose $m=1+\floor{\alpha}$. Thus we have $\alpha<m$ which is equivalent to 
 $$
 \begin{array}{ccl}
   \frac{\log\left(\frac{1}{b-a}\right)}{\log(2)} < m &\Leftrightarrow& \log\left(\frac{1}{b-a}\right)<\log(2^m)\\
                                                      & \Leftrightarrow & \frac{1}{b-a}<2^m\\
                                                      & \Leftrightarrow & b-a > \frac{1}{2^m}
                                                   
 \end{array}
 $$

With this in hand, we have $R_m\in r(A)$, and since for $x=a\in A$ and $y=b\in A$ we have 
 $$|x-y|>\frac{1}{2^{m}}$$ 
 then $A\times A \not \subset  R_m$ i.e. $R_m \notin \delta(A)$. Thus $\delta(A)\subsetneq r(A)$
\end{enumerate}
\end{example}

\begin{definition}
Let $M$ be a nonempty set and $(R_n)_n$ a binary relational system. Let $T:M\rightarrow M$ be a self mapping, we say that $T$ is relational homomorphisms if:
\begin{itemize}
    \item[$(r_8)$] given $n \in\mathbb{Z}$, if $(x, y) \in R_n$ then $(Tx, Ty) \in R_n$. 
\end{itemize}
\end{definition}

The following theorem is a particular case for countable family of relational system of Khamsi-Pouzet abstract version.

\begin{theorem}\label{Thm2}\cite[Theorem 2.9]{ref1}
  Let $M$ be a nonempty bounded set, $(R_n)_{n\in\mathbb{Z}}$ a sequence of reflexive binary relations over $M$ and $T$ a self-map of $M$ such that $(r_1)-(r_8)$ hold. Then $T$ has a fixed point.
\end{theorem}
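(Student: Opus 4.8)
The plan is to transplant Kirk's classical argument into this purely relational framework: first produce, via a Zorn-type argument, a minimal nonempty $\mathcal{E}$-admissible set that is invariant under $T$, and then use normal structure to force that minimal set to be a single point, which is necessarily fixed. Concretely, I would consider the family
\[
\Sigma = \{A \in \mathcal{A}_\mathcal{E}(M) : A \neq \varnothing \text{ and } T(A) \subseteq A\},
\]
ordered by inclusion. It is nonempty: by boundedness $(r_5)$ there is $N$ with $M \times M \subseteq R_N$, so $\mathcal{B}(x,R_N) = M$ for each $x$, whence $M$ is itself an $\mathcal{E}$-ball and lies in $\mathcal{A}_\mathcal{E}(M)$, and $M$ is trivially $T$-invariant. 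For a chain $\{A_i\}$ in $\Sigma$, the intersection $\bigcap_i A_i$ is again an intersection of $\mathcal{E}$-balls, hence $\mathcal{E}$-admissible, and it is $T$-invariant since $T(\bigcap_i A_i) \subseteq \bigcap_i T(A_i) \subseteq \bigcap_i A_i$; it is nonempty by the compact structure $(r_6)$, because a chain trivially has the finite intersection property. Zorn's lemma then furnishes a $\subseteq$-minimal element $K \in \Sigma$.

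Next I would record the key consequence of minimality. Writing $K_1 = cov_\mathcal{E}(T(K))$ and using that $cov_\mathcal{E}$ is the closure operator whose closed sets are the $\mathcal{E}$-admissible sets, monotonicity together with $T(K) \subseteq K$ and admissibility of $K$ gives $K_1 \subseteq cov_\mathcal{E}(K) = K$; moreover $T(K_1) \subseteq T(K) \subseteq K_1$, so $K_1 \in \Sigma$ and minimality forces $cov_\mathcal{E}(T(K)) = K$. Now suppose, for contradiction, that $K$ is not reduced to one point. Normal structure $(r_7)$ yields $r_\mathcal{E}(K) \neq \delta_\mathcal{E}(K)$; since any $R_n$ with $K \times K \subseteq R_n$ covers $K$ from each of its points, one has $\delta_\mathcal{E}(K) \subseteq r_\mathcal{E}(K)$, so there is an index $m$ with $R_m \in r_\mathcal{E}(K) \setminus \delta_\mathcal{E}(K)$: some $c_0 \in K$ satisfies $K \subseteq \mathcal{B}(c_0, R_m)$, yet $K \times K \not\subseteq R_m$.

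Then I would examine the set of ``$R_m$-centres''
\[
\mathcal{C} = \{c \in K : K \subseteq \mathcal{B}(c, R_m)\} = K \cap \bigcap_{x \in K} \mathcal{B}(x, R_m),
\]
where the second equality uses symmetry $(r_1)$. As an intersection of $\mathcal{E}$-balls with the admissible set $K$, the set $\mathcal{C}$ is $\mathcal{E}$-admissible; it is nonempty since $c_0 \in \mathcal{C}$, and it is a \emph{proper} subset of $K$, because $\mathcal{C} = K$ would mean that every point of $K$ covers $K$ within $R_m$, i.e. $K \times K \subseteq R_m$, contradicting $R_m \notin \delta_\mathcal{E}(K)$. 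Finally $\mathcal{C}$ is $T$-invariant: if $c \in \mathcal{C}$ then $(c,x) \in R_m$ for all $x \in K$, so by the homomorphism property $(r_8)$ we get $(Tc, Tx) \in R_m$, that is $T(K) \subseteq \mathcal{B}(Tc, R_m)$; since $\mathcal{B}(Tc,R_m)$ is an admissible set containing $T(K)$, it contains the smallest such set $cov_\mathcal{E}(T(K)) = K$, whence $K \subseteq \mathcal{B}(Tc, R_m)$, and as $Tc \in T(K) \subseteq K$ this gives $Tc \in \mathcal{C}$. Thus $\mathcal{C} \in \Sigma$ with $\mathcal{C} \subsetneq K$, contradicting minimality. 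Hence $K = \{p\}$ is a singleton, and $T(K) \subseteq K$ yields $Tp = p$.

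The hard part will be the $T$-invariance of $\mathcal{C}$, and specifically the step upgrading the forward estimate $T(K) \subseteq \mathcal{B}(Tc, R_m)$, which comes directly from $(r_8)$, to the stronger $K \subseteq \mathcal{B}(Tc, R_m)$; this is exactly where the minimality identity $cov_\mathcal{E}(T(K)) = K$ is indispensable, turning a statement about the image $T(K)$ into one about all of $K$. A secondary technical point to pin down first is that $\mathcal{A}_\mathcal{E}(M)$ is closed under arbitrary intersections and that each $\mathcal{B}(x,R_n)$ is itself $\mathcal{E}$-admissible (both resting on reflexivity, which guarantees $x \in \mathcal{B}(x,R_n)$), so that $\Sigma$ is stable under the intersections used in the Zorn step and in building $\mathcal{C}$, and so that $cov_\mathcal{E}$ behaves as the required closure operator.
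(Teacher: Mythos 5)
Your argument is correct, but note that there is no in-paper proof to compare it with: Theorem \ref{Thm2} is quoted from \cite[Theorem 2.9]{ref1} without proof, and what you have written is precisely the Kirk--Penot scheme by which the cited result is established there --- Zorn's lemma plus the compact structure $(r_6)$ to extract a minimal nonempty $T$-invariant admissible set $K$, the minimality identity $cov_\mathcal{E}(T(K))=K$, and then normal structure $(r_7)$ together with $(r_1)$ and $(r_8)$ to exhibit the proper, admissible, $T$-invariant set of centres that contradicts minimality. So you have in effect reconstructed the omitted proof, and all of the main steps are sound.

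One warning about the point you defer to the end as ``secondary'': it is the only place where your proof could genuinely break, and it does not rest on reflexivity alone. The paper's literal definition of $cov_\mathcal{E}(A)$ restricts the centres of the covering balls to lie in $A$; under that reading, $\mathcal{A}_\mathcal{E}(M)$ need \emph{not} be closed under intersections and $cov_\mathcal{E}$ need \emph{not} be monotone. For instance, take $M=\{a,b,z,c,e\}$ with a semimetric satisfying $d(a,b)=2$, $d(a,z)=d(b,z)=1$, $d(c,a)=d(c,b)=d(e,a)=d(e,b)=3$, and all remaining distances equal to $10$: then $\{a,b,c\}=B(c,3)$ and $\{a,b,e\}=B(e,3)$ are admissible in the restricted sense, but their intersection $\{a,b\}$ is not, since every ball centred at $a$ or at $b$ that contains $\{a,b\}$ must have radius at least $2$ and hence contains $z$. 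The same phenomenon occurs verbatim for the relational balls $\mathcal{B}(x,R_n)$. Consequently, your construction of $\mathcal{C}$ as an admissible set, the stability of $\Sigma$ under intersections of chains, and the inclusion $cov_\mathcal{E}(T(K))\subseteq K$ all require the standard convention --- the one actually used in \cite{ref1} and in \cite{ref4} --- that $cov_\mathcal{E}(A)$ is the intersection of \emph{all} $\mathcal{E}$-balls containing $A$, with arbitrary centres, so that admissible sets form an intersection-closed family and $cov_\mathcal{E}$ is its hull operator. With that definition stated explicitly, your proof is complete; with the paper's literal definition, the ``technical point'' is false rather than routine.
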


\begin{example}
Let $M$ and $R_n$ be as the Example \ref{ex2}. Consider $T: M\rightarrow M$ to be the mapping defined by $Tx= x+1$. It is a simple verification that $T$ satisfies condition $(r_8)$. All assumptions of Theorem \ref{Thm2} are satisfied except that $M$ is a bounded set, however $T$ does not have a fixed point. 
\end{example}
\begin{definition}\label{Def1}
Let $M$ be a nonempty set. A mapping $d: M\times M \rightarrow \R$ is called a semimetric over $M$ if it satisfies
the following conditions:

\begin{enumerate}[(d1)]
  \item $d(x, y) \geq 0$ for all $x, y \in  M$, and $d(x, y) = 0$ if and only if $x = y$;
  \item $d(x, y) = d(y, x)$ for all $x, y \in M$.
\end{enumerate}
The pair $(X, d)$ is called semimetric space.
\end{definition}

Let $x\in M$, the ball of center $x$ and radius $r>0$, is the set 
$$B(x,r) = \{y\in M\; : \; d(x, y)\leq r\}.$$

For a subset $A$ of a semimetric space $\left(M,d\right)$, set  
\[
cov\left(A\right)=\bigcap\left\{ B\left(x,r\right)\;:\;x\in A,\; A\subseteq B\left(x,r\right)\right\} 
\]
we will say that $A$ is an admissible set if $A=cov\left(A\right)$, that is an intersection of balls. The family of all admissible subsets
of $M$ will be denoted by $\mathcal{A}\left(M\right)$. For a bounded subset $A \subset M$, we set

\begin{itemize}
  \item $r_x(A):=\sup\{d(x,y):\; y\in A\}$;
  \item $r_d(A):=\inf\{r_x(A):\; x\in A\}$;
  \item $\delta_d(A):=\sup\{d(x,y):\; x,y\in A\}$;
\end{itemize}

\begin{definition}\label{def1}\rm
Let $(M, d)$ be a semimetric space.
\begin{enumerate}[(i)]
  \item  $(M, d)$ has a compact structure if $\mathcal{A}\left(M\right)$ has the finite intersection property, i.e., for every family $\mathcal{F}$ of members of $\mathcal{A}\left(M\right)$, the intersection of $\mathcal{F}$ is nonempty provided that the intersection of all finite subfamilies of $\mathcal{F}$ are nonempty.
  
  \item $(M,d)$ is said to have normal structure if any $A\in \mathcal{A}\left(M\right)$ not reduced to one point, we have $r_d(A) < \delta_d(A)$.
\end{enumerate}
\end{definition} 

The following theorem is a particular case for semimetric of this abstract version.

\begin{theorem}\label{Thm4}\cite[Theorem 3]{ref4}
Let $(M, d)$ be a nonempty bounded semimetric space such that $\mathcal{A}(M)$ has compact and normal structure. Then every nonexpansive mapping $T:M\rightarrow M$ has a fixed point.
\end{theorem}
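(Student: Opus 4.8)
The plan is to realize the semimetric space as a binary relational system and then read off the hypotheses so that the abstract Khamsi--Pouzet fixed point theorem applies. First I would attach to $(M,d)$ the family $\mathcal{E}=\{R_r\}_{r>0}$ defined by $R_r=\{(x,y)\in M\times M:\ d(x,y)\le r\}$. The structural axioms are immediate: $R_r$ is symmetric by (d2); $R_r\subseteq R_{r'}$ whenever $r\le r'$; $\bigcup_{r>0}R_r=M\times M$ because $d$ is finite-valued; and $\bigcap_{r>0}R_r=\Delta$ by (d1). Moreover $\mathcal{B}(x,R_r)=B(x,r)$ for every $x$ and $r$, so the $\mathcal{E}$-balls are exactly the metric balls.

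Next I would match the two admissibility notions. Since the $\mathcal{E}$-balls and the metric balls coincide, so do their intersections, giving $\mathcal{A}_\mathcal{E}(M)=\mathcal{A}(M)$. Consequently the compact structure hypothesis transfers verbatim: the finite intersection property of $\mathcal{A}(M)$ is literally that of $\mathcal{A}_\mathcal{E}(M)$. For normal structure I would compute, for a non-singleton $A\in\mathcal{A}(M)$, that $\delta_\mathcal{E}(A)=\{R_r:\ r\ge\delta_d(A)\}$ and $r_\mathcal{E}(A)=\{R_r:\ r\ge r_d(A)\}$; one always has $\delta_\mathcal{E}(A)\subseteq r_\mathcal{E}(A)$, and the inclusion is strict exactly when $r_d(A)<\delta_d(A)$. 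Thus metric normal structure (Definition \ref{def1}) is equivalent to the relational normal structure $(r_7)$. Finally, $T$ nonexpansive means $d(Tx,Ty)\le d(x,y)$, which is precisely $(r_8)$ for every $R_r$, so $T$ is a relational homomorphism, while boundedness of $M$ is $(r_5)$. With all of $(r_1)$--$(r_8)$ in force for $\mathcal{E}$, I would invoke the abstract version of the Khamsi--Pouzet theorem (of which Theorem \ref{Thm2} is the countable specialization) to produce a fixed point of $T$, which is the desired conclusion.

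The main obstacle lies in the choice of index set, and it is genuine. If one tried to use the countable Theorem \ref{Thm2} directly via a dyadic family $R_n=\{(x,y):\ d(x,y)\le 2^{-n}\}$, then only $\mathcal{A}_\mathcal{E}(M)\subseteq\mathcal{A}(M)$ holds, and normal structure need not survive: there may be no dyadic number $2^{-n}$ in the gap $(r_d(A),\delta_d(A))$, in which case $r_\mathcal{E}(A)=\delta_\mathcal{E}(A)$ although $r_d(A)<\delta_d(A)$. Hence the reduction must retain the full continuum of radii, and the delicate point is exactly verifying that both the compact and the normal structures correspond under the identification $\mathcal{A}_\mathcal{E}(M)=\mathcal{A}(M)$.

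I should add that a self-contained route is also available, mirroring Kirk's argument: use Zorn's lemma together with the compact structure to extract a minimal nonempty admissible $T$-invariant set $K$ with $cov(T(K))=K$, and then use normal structure to exhibit the set of Chebyshev centers of $K$ as a proper admissible $T$-invariant subset, forcing $K$ to be a singleton and hence a fixed point. The subtlety there is that semimetric spaces lack the triangle inequality, so the pushforward of balls under $T$ must be handled through the relational homomorphism property rather than by metric estimates, which is precisely the role played by the relational reformulation above.
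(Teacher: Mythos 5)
Your proposal is correct in substance, but it is not the argument the paper gives. The paper offers no proof of Theorem \ref{Thm4} at all: the result is quoted from Kirk and Kang \cite[Theorem 3]{ref4}, and your final paragraph (Zorn's lemma producing a minimal admissible $T$-invariant set, then Chebyshev centers and normal structure collapsing it to a point) is essentially the proof in that cited source. Your main route, the relational reduction, instead parallels the direction Theorem \ref{Thm2} $\Rightarrow$ Theorem \ref{Thm4} of the paper's equivalence theorem, with one decisive difference: there the paper takes the dyadic family $R_n=\{(x,y):d(x,y)\leq 2^{-n}\}$, $n\in\mathbb{Z}$, and asserts that all assumptions of Theorem \ref{Thm2} are then satisfied, while you keep all radii $r>0$. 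Your objection to the dyadic reduction is not hypothetical; it exposes a genuine gap in the paper's own argument. For example, let $M=[0,3]$ with $d(x,y)=|x-y|^{1/2}$. This is a bounded metric (hence semimetric) space whose admissible sets are exactly the nonempty closed subintervals, so it has compact and normal structure; but $M$ is itself an $\mathcal{E}$-ball of the dyadic system, $r_d(M)=\sqrt{3/2}$, $\delta_d(M)=\sqrt{3}$, and no power of $2$ lies in $[\sqrt{3/2},\sqrt{3})$, so any $R_n$ for which some center covers all of $M$ already contains $M\times M$; that is, $r_\mathcal{E}(M)=\delta_\mathcal{E}(M)$ and condition $(r_7)$ fails. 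Normal structure does not transfer to the dyadic system, exactly as you warn.

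Two points in your own write-up need care. First, since your family $\{R_r\}_{r>0}$ is indexed by a continuum, Theorem \ref{Thm2} as stated (a $\mathbb{Z}$-indexed decreasing sequence) cannot be invoked; you genuinely need the arbitrary-index theorem of Khamsi and Pouzet \cite[Theorem 2.9]{ref1}, of which the paper presents Theorem \ref{Thm2} as the countable special case. Your construction does satisfy its hypotheses (reflexive symmetric relations, $M\times M\subseteq R_r$ for $r\geq\delta_d(M)$ by boundedness, and $\mathcal{A}_\mathcal{E}(M)=\mathcal{A}(M)$ so compactness and normality carry over verbatim), but the proof should say explicitly that it is this abstract theorem, not Theorem \ref{Thm2}, that is being applied. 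Second, your identity $r_\mathcal{E}(A)=\{R_r:r\geq r_d(A)\}$ is valid only when the Chebyshev radius is attained; in general $R_r\in r_\mathcal{E}(A)$ if and only if $r_x(A)\leq r$ for some $x\in A$. The conclusion you want survives this correction: if $r_d(A)<\delta_d(A)$, then any $r$ with $r_d(A)<r<\delta_d(A)$ satisfies $r_x(A)<r$ for some $x\in A$ while $A\times A\not\subseteq R_r$, so $\delta_\mathcal{E}(A)\subsetneq r_\mathcal{E}(A)$; conversely, if $r_d(A)=\delta_d(A)$ the two sets coincide. With these repairs your proof is complete, and it is in fact more careful than the reduction offered in the paper itself.
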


\section{Equivalence between binary relational system and semimetric space}
The following result constitutes a bridge between binary relation system and semimetric spaces.
\begin{proposition}\label{Prop1}\rm
Let $M$ be a nonempty set and $(R_n)_{n\in\mathbb{Z}}$ a sequence of reflexive binary relations over $M$ satisfying the conditions $(r_1)-(r_4)$.\\ 
Define the mappings $\alpha : M\times M\rightarrow 2^\mathbb{Z}$, $\mu : M\times M\rightarrow \mathbb{Z}\cup \{\infty\}$, and $\delta: M\times M\rightarrow [0, \infty)$ as:
$$\alpha (x, y) = \{n \in \mathbb{Z} : (x, y) \in R_n\},\quad \mu(x, y) = \sup \alpha (x, y), \quad \delta(x, y) = 2^{-\mu(x,y)}$$

for each $(x, y) \in M\times M$ (here, by convention, $2^{-\infty}= 0$).

Then, $\delta$ is a semimetric over $M$  and for every $n \in \mathbb{Z}$,
\begin{equation}\label{eq1}
  R_n = \{(x, y) \in M\times M : \delta(x, y) \leq 2^{-n}\}
\end{equation}

\end{proposition}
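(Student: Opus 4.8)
The plan is to first pin down the order-theoretic shape of the set $\alpha(x,y)$, since everything else follows from it. By $(r_2)$ the relations $R_n$ are nested decreasingly in $n$, so $(x,y)\in R_n$ forces $(x,y)\in R_m$ for every $m\le n$; hence $\alpha(x,y)$ is downward closed in $\mathbb{Z}$. By $(r_3)$ it is nonempty, and by $(r_4)$ it equals all of $\mathbb{Z}$ exactly when $x=y$. A nonempty, downward-closed subset of $\mathbb{Z}$ is either all of $\mathbb{Z}$ or of the form $\{m\in\mathbb{Z}:m\le N\}$ for a unique integer $N$, and in the latter case the supremum $N$ is actually attained, since a set of integers bounded above has a greatest element. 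This yields the clean dichotomy I will lean on throughout: either $x=y$, in which case $\alpha(x,y)=\mathbb{Z}$, $\mu(x,y)=+\infty$ and $\delta(x,y)=0$; or $x\neq y$, in which case $\mu(x,y)=N\in\mathbb{Z}$ and $\delta(x,y)=2^{-N}>0$.

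With this in hand, verifying that $\delta$ is a semimetric is immediate. Nonnegativity holds because $\delta=2^{-\mu}\ge 0$ under the convention $2^{-\infty}=0$. For the separation clause of (d1), $\delta(x,y)=0$ iff $\mu(x,y)=+\infty$ iff $\alpha(x,y)=\mathbb{Z}$ iff $(x,y)\in\bigcap_{n}R_n=\Delta$, i.e. $x=y$, which is exactly $(r_4)$. Symmetry (d2) follows from $(r_1)$: each $R_n$ is symmetric, so $(x,y)\in R_n\iff(y,x)\in R_n$, giving $\alpha(x,y)=\alpha(y,x)$ and hence $\mu(x,y)=\mu(y,x)$ and $\delta(x,y)=\delta(y,x)$.

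For the representation formula, I would reduce the claim to the equivalence $(x,y)\in R_n\iff\mu(x,y)\ge n$, because $\delta(x,y)\le 2^{-n}$ unwinds, by monotonicity of $t\mapsto 2^{-t}$, to precisely $\mu(x,y)\ge n$ (including the boundary case $\mu(x,y)=+\infty$, where $\delta(x,y)=0\le 2^{-n}$). The forward direction is trivial: $(x,y)\in R_n$ puts $n\in\alpha(x,y)$, so $\mu(x,y)=\sup\alpha(x,y)\ge n$. The reverse direction is where the attained-supremum observation does the real work. If $\mu(x,y)\ge n$, then either $\mu(x,y)=+\infty$ and $\alpha(x,y)=\mathbb{Z}\ni n$, or $\mu(x,y)=N\ge n$ is attained, so $N\in\alpha(x,y)$ and downward closure forces $n\in\alpha(x,y)$; in both cases $(x,y)\in R_n$.

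The only genuine subtlety, and the single point at which the argument could silently fail, is exactly this attainment of the supremum. Over a continuous index set one would obtain only the inclusion $R_n\subseteq\{(x,y):\delta(x,y)\le 2^{-n}\}$, with a possible gap at the endpoint; it is the integrality of $\mathbb{Z}$ together with downward closure that upgrades this to the equality in \eqref{eq1}. I would therefore isolate and state that step explicitly rather than absorb it into a bare appeal to monotonicity.
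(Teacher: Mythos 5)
Your proof is correct and complete. It is worth pointing out, however, that the paper itself does not write out an argument for Proposition \ref{Prop1}: its entire proof is a one-line deferral to \cite[Proposition 3.1]{ref3}, a result stated for the $2$-inframetric setting in which the nesting condition $(r_2)$ is replaced by $(r_9)$: $R_n^2\subseteq R_{n-1}$ (which, under reflexivity, implies $(r_2)$ since $R_n\subseteq R_n^2$). Your write-up therefore supplies exactly what that citation leaves implicit, and it does so along the same lines the cited argument takes: the whole content is the order-theoretic structure of $\alpha(x,y)$ --- nonempty by $(r_3)$, downward closed by $(r_2)$, equal to all of $\mathbb{Z}$ precisely on the diagonal by $(r_4)$ --- combined with the fact that a nonempty set of integers bounded above attains its supremum. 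That attainment is what upgrades the easy inclusion $R_n\subseteq\{(x,y):\delta(x,y)\leq 2^{-n}\}$ to the claimed equality, and your decision to isolate it as the crux is exactly right: it is the step that fails for real-indexed families of relations, and it is the reason the sublevel sets of $\delta$ reproduce the $R_n$ on the nose (this discreteness is also what the paper exploits later when it identifies $\mathcal{A}(M)$ with $\mathcal{A}_{\mathcal{E}}(M)$). The only difference in scope is that the cited proposition additionally derives the $2$-inframetric inequality from $(r_9)$, which is neither claimed nor needed under the hypotheses here, so nothing is missing from your argument.
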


\begin{proof}
The proof follows the same line as \cite[Proposition 3.1]{ref3}.

\end{proof}

\begin{theorem}\label{Thm1}\rm
  Let $T$ be a self-map of an abstract set $M$. The following statements are equivalent. 
  \begin{enumerate}[(i)]
    \item There exists a sequence $(R_n)_{n\in\mathbb{Z}}$ of binary relations in $M$ such that the assumptions $(r_1)-(r_4)$ are satisfied.
    \item There exists a semimetric $d$ such that $T$ is an nonexpansive with respect to $d$.
  \end{enumerate}
\end{theorem}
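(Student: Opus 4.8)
The whole argument is organized around Proposition \ref{Prop1}, which furnishes a precise dictionary between a sequence of relations and a semimetric: starting from $(R_n)$ one builds $\delta$ with $R_n=\{(x,y):\delta(x,y)\le 2^{-n}\}$, and conversely any semimetric $d$ yields relations by the same threshold formula. The plan is to verify that, under this dictionary, the homomorphism condition $(r_8)$ for $T$ and the nonexpansiveness of $T$ are the very same statement, so that the equivalence of (i) and (ii) follows by transporting $T$ across the correspondence (the relations in (i) being understood as those for which $T$ satisfies $(r_8)$).

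For $(i)\Rightarrow(ii)$, I would start from a sequence $(R_n)$ satisfying $(r_1)$–$(r_4)$ for which $T$ is a relational homomorphism, and let $\delta$ be the semimetric produced by Proposition \ref{Prop1}; the claim is that $T$ is nonexpansive for $\delta$. Fix $x,y\in M$. If $x=y$ there is nothing to prove, since then $Tx=Ty$ and $\delta(Tx,Ty)=0=\delta(x,y)$. Otherwise, condition $(r_2)$ forces the set $\alpha(x,y)=\{n:(x,y)\in R_n\}$ to be a down-set of $\mathbb{Z}$, nonempty by $(r_3)$ and bounded above by $(r_4)$, so its supremum $\mu(x,y)$ is an integer that is actually \emph{attained}, i.e. $(x,y)\in R_{\mu(x,y)}$. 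Applying $(r_8)$ gives $(Tx,Ty)\in R_{\mu(x,y)}$, whence $\delta(Tx,Ty)\le 2^{-\mu(x,y)}=\delta(x,y)$ by the characterization of $R_n$ in Proposition \ref{Prop1}. This is exactly nonexpansiveness.

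For $(ii)\Rightarrow(i)$, I would take a semimetric $d$ for which $T$ is nonexpansive and define $R_n:=\{(x,y):d(x,y)\le 2^{-n}\}$. Conditions $(r_1)$–$(r_4)$ are then routine: symmetry of $d$ gives $(r_1)$; the inequality $2^{-n}<2^{-(n-1)}$ gives the nesting $R_n\subseteq R_{n-1}$ of $(r_2)$; letting $n\to-\infty$, so that $2^{-n}\to\infty$, shows every pair eventually lies in some $R_n$, giving $(r_3)$; and letting $n\to+\infty$, so that $2^{-n}\to 0$, together with (d1) forces $\bigcap_n R_n=\Delta$, giving $(r_4)$. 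Finally, if $(x,y)\in R_n$ then nonexpansiveness yields $d(Tx,Ty)\le d(x,y)\le 2^{-n}$, so $(Tx,Ty)\in R_n$, which is $(r_8)$.

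I expect the only genuinely delicate point to be in $(i)\Rightarrow(ii)$: one must argue that the supremum $\mu(x,y)$ is attained, so that a single relation $R_{\mu(x,y)}$ can be fed into $(r_8)$. This is precisely where properties $(r_2)$–$(r_4)$ (nestedness, exhaustion, and separation) are indispensable—without them $\alpha(x,y)$ need not be an attained down-set, and the clean identity $\delta(Tx,Ty)\le\delta(x,y)$ could fail. Everything else reduces to the bookkeeping already packaged in Proposition \ref{Prop1}.
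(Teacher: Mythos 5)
Your proof is correct and takes essentially the same route as the paper's: (i)$\Rightarrow$(ii) by taking $d=\delta$ from Proposition \ref{Prop1}, feeding the attained value $(x,y)\in R_{\mu(x,y)}$ into the homomorphism condition, and (ii)$\Rightarrow$(i) via the threshold relations $R_n=\{(x,y): d(x,y)\le 2^{-n}\}$. You are in fact somewhat more careful than the paper, which asserts $(x,y)\in R_{\mu(x,y)}$ without justifying attainment of the supremum, handles the case $x=y$ implicitly, and miscites the homomorphism hypothesis $(r_8)$ as ``$(r_4)$''.
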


\begin{proof}
\begin{itemize}
  \item[(i)$\Rightarrow$(ii)] Using Propostion \ref{Prop1} and taking $d=\delta$ we get the first part of the statement. To see that $T$ is $d$-nonexpansive mapping, let $x,y\in M$, so $(x,y)\in R_{\mu(x,y)}$, then  by $(r_4)$ $(Tx,Ty)\in R_{\mu(x,y)}$. Hence, $\mu(Tx,Ty)\geq \mu(x,y)$, which is exactely $d(Tx,Ty)\leq d(x,y)$.
  \item[(i)$\Leftarrow$(ii)] Let $R_n = \{(x, y) \in M\times M : d(x, y) \leq 2^{-n}\}$ for every $n \in \mathbb{Z}$ then $(r_1)-(r_4)$ hold.
\end{itemize}
\end{proof}

\begin{theorem}
  Theorem \ref{Thm4} and Theorem \ref{Thm2} are equivalent.
\end{theorem}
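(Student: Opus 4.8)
The plan is to prove both implications through the dictionary furnished by Proposition \ref{Prop1} and Theorem \ref{Thm1}, checking that each hypothesis of one theorem translates, under the correspondence $R_n \leftrightarrow \{(x,y) : \delta(x,y)\le 2^{-n}\}$, into the matching hypothesis of the other; since a fixed point is simply a point $p$ with $Tp=p$, the two conclusions then coincide. The backbone is that $\mathcal{B}(x,R_n)=B(x,2^{-n})$, so $\mathcal{E}$-balls are distinguished $d$-balls, and Theorem \ref{Thm1} already identifies ``$T$ is a relational homomorphism'' with ``$T$ is $d$-nonexpansive''.

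For Theorem \ref{Thm4} $\Rightarrow$ Theorem \ref{Thm2}, I would start from data $(M,(R_n),T)$ satisfying $(r_1)$--$(r_8)$ with $M$ bounded, and invoke Proposition \ref{Prop1} to obtain the semimetric $\delta$ with $R_n=\{\delta\le 2^{-n}\}$. Boundedness $(r_5)$, i.e. $M\times M\subseteq R_N$, says exactly $\delta\le 2^{-N}$, so $(M,\delta)$ is bounded. Because $\delta$ takes only the values $2^{-n}$ and $0$, every $\delta$-ball of arbitrary radius coincides with some $\mathcal{E}$-ball, whence $\mathcal{A}(M)=\mathcal{A}_\mathcal{E}(M)$ and the compact structure $(r_6)$ transfers verbatim; $T$ is $\delta$-nonexpansive by Theorem \ref{Thm1}; and normal structure is handled below. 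Theorem \ref{Thm4} then delivers the fixed point. Conversely, for Theorem \ref{Thm2} $\Rightarrow$ Theorem \ref{Thm4} I would start from a bounded semimetric $(M,d)$ with $\mathcal{A}(M)$ compact and normal and $T$ nonexpansive, set $R_n=\{(x,y):d(x,y)\le 2^{-n}\}$, and read off $(r_1)$--$(r_4)$ from Theorem \ref{Thm1}, reflexivity from $d(x,x)=0$, and $(r_5)$ from boundedness of $d$. Condition $(r_8)$ is automatic, since $d(Tx,Ty)\le d(x,y)\le 2^{-n}$ for any threshold. As each $\mathcal{E}$-ball is a $d$-ball, one gets the inclusion $\mathcal{A}_\mathcal{E}(M)\subseteq\mathcal{A}(M)$, and the finite intersection property, being inherited by subfamilies, passes from $\mathcal{A}(M)$ to $\mathcal{A}_\mathcal{E}(M)$, giving $(r_6)$; Theorem \ref{Thm2} then applies.

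The step I expect to be the main obstacle is matching the two normal structures, that is, proving for each non-singleton admissible $A$ the equivalence of $r_\mathcal{E}(A)\ne\delta_\mathcal{E}(A)$ with $r_d(A)<\delta_d(A)$. Writing $D=\delta_d(A)$ and $R=r_d(A)$, one finds $\delta_\mathcal{E}(A)=\{n:2^{-n}\ge D\}$ and $r_\mathcal{E}(A)=\{n:\exists\,c\in A,\ \sup_{y\in A}d(c,y)\le 2^{-n}\}$, with $\delta_\mathcal{E}(A)\subseteq r_\mathcal{E}(A)$ always holding. When $\delta$ is the dyadic semimetric of Proposition \ref{Prop1}, both $R$ and $D$ are powers of $2$, the two sets are the half-lines $\{n\le n_D\}$ and $\{n\le n_R\}$ with $2^{-n_D}=D$ and $2^{-n_R}=R$, and the condition becomes $n_R>n_D\Leftrightarrow R<D$: here the normal structures coincide exactly, which settles the direction Theorem \ref{Thm4} $\Rightarrow$ Theorem \ref{Thm2} cleanly.

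The difficulty is confined to Theorem \ref{Thm2} $\Rightarrow$ Theorem \ref{Thm4} for a genuinely non-dyadic semimetric: the coarse rounding $2^{-n}$ can leave no dyadic value in the interval $[R,D)$ (for instance $R=0.3<0.4=D$ yet $\lfloor\log_2(1/R)\rfloor=\lfloor\log_2(1/D)\rfloor$), collapsing the strict inequality $r_d(A)<\delta_d(A)$ into an equality of relation-sets and so voiding $(r_7)$. My plan to repair this is to exploit the freedom in the threshold sequence: replace $\{d\le 2^{-n}\}$ by $\{d\le c_n\}$ for a decreasing exhausting sequence $c_n$ (equivalently, reparametrize $d$ before discretizing) chosen to meet every interval $[r_d(A),\delta_d(A))$; since nonexpansiveness, boundedness, and the inclusion $\mathcal{A}_\mathcal{E}(M)\subseteq\mathcal{A}(M)$ are all insensitive to the thresholds, this restores $(r_7)$ while preserving $(r_1)$--$(r_6)$ and $(r_8)$, and Theorem \ref{Thm2} yields the common fixed point. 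The genuine work, and the principal gap I would need to close, is showing that a single monotone sequence can simultaneously straddle all of these critical intervals in a space with compact and normal structure.
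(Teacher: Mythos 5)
Your first direction (Theorem \ref{Thm4} $\Rightarrow$ Theorem \ref{Thm2}) is correct and is essentially the paper's own argument: the semimetric $d=2^{-\mu}$ of Proposition \ref{Prop1} takes values in $\{2^{-n}:n\in\mathbb{Z}\}\cup\{0\}$, so every $d$-ball is an $\mathcal{E}$-ball, $\mathcal{A}(M)=\mathcal{A}_\mathcal{E}(M)$, the compact structure transfers verbatim, and since $r_d(A)$ and $\delta_d(A)$ are then themselves (attained) dyadic values, the equivalence $r_d(A)<\delta_d(A)\Leftrightarrow r_\mathcal{E}(A)\neq\delta_\mathcal{E}(A)$ holds exactly as you say. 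No complaint there.

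The problem is the converse, and here your diagnosis is sharp: the obstacle you isolate is genuine, and it is worth stressing that the paper does not resolve it either --- its proof of Theorem \ref{Thm2} $\Rightarrow$ Theorem \ref{Thm4} consists of the single assertion that $R_n=\{(x,y):d(x,y)\le 2^{-n}\}$ satisfies all hypotheses of Theorem \ref{Thm2}. Conditions $(r_1)$--$(r_6)$ and $(r_8)$ do hold (with $(r_6)$ inherited via the inclusion $\mathcal{A}_\mathcal{E}(M)\subseteq\mathcal{A}(M)$, as you note), but $(r_7)$ can fail: for a non-singleton $A\in\mathcal{A}_\mathcal{E}(M)$ one has $\delta_\mathcal{E}(A)=\{n:\delta_d(A)\le 2^{-n}\}$ and $r_\mathcal{E}(A)=\{n:\exists x\in A,\ r_x(A)\le 2^{-n}\}$, so $r_\mathcal{E}(A)\neq\delta_\mathcal{E}(A)$ demands an integer $n$ and a point $x\in A$ with $r_x(A)\le 2^{-n}<\delta_d(A)$; if the interval $[r_d(A),\delta_d(A))$ contains no power of $2$ (your $0.3<0.4$ example), this is impossible even though $d$-normality holds. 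So the dyadic discretization does not prove this implication, in your version or in the paper's. As for your proposed repair by general thresholds $c_n$, you are right to flag it as the principal gap, and I would add that it faces a structural obstruction rather than a missing computation: $(r_2)$ forces any threshold sequence to be monotone and $(r_4)$ forces $c_n\to 0$ whenever $d$ takes values arbitrarily close to $0$ off the diagonal, so $\{c_n\}$ has only finitely many terms in any interval $[a,b]$ with $a>0$ and hence cannot meet infinitely many pairwise disjoint critical intervals accumulating at a positive value; worse, the family $\mathcal{A}_\mathcal{E}(M)$ whose critical intervals must be hit itself changes with the choice of $\{c_n\}$, so the requirement is self-referential. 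In its present state, then, your proof (like the paper's) establishes only the direction Theorem \ref{Thm4} $\Rightarrow$ Theorem \ref{Thm2}; the converse needs an idea that neither text supplies.
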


\begin{proof}$\\$

\begin{itemize}
  \item[\ref{Thm4}$\Rightarrow$\ref{Thm2}] Under the assumptions of Theorem \ref{Thm2} and notations of Proposition \ref{Prop1}, define
  $$d(x, y) = 2^{-\mu(x,y)}$$ 
  for every $x,y\in M$. Let $n\in \mathbb{Z}$ and $x\in M$, we have 
  $$
  \begin{array}{ccl}
   y \in \mathcal{B}(x,R_n) & \Leftrightarrow & (x,y)\in R_n\\
                            & \Leftrightarrow & d(x,y) \leq \frac{1}{2^n}\\
                            & \Leftrightarrow & y\in B(x, \frac{1}{2^n})    
  \end{array}
  $$
  then $\mathcal{B}(x,R_n)=B(x,\frac{1}{2^n})$ and since $d(x,y) \in \{\frac{1}{2^n}: n\in \mathbb{Z}\}\cup\{0\}$ we have $\mathcal{A}\left(M\right)=\mathcal{A}_\mathcal{E}\left(M\right)$. 
  
  \medskip
  In addition, since $\mathcal{A}_\mathcal{E}\left(M\right)$ has compact structure then $\mathcal{A}\left(M\right)$ has compact structure too. 
  
  \medskip
  Now we show that $\mathcal{A}\left(M\right)$ has normal structure. Indeed, Let $A$ be a bounded subset of $\mathcal{A}\left(M\right)$, then
$$
\begin{array}{ccl}
  r_d(A)<\delta_d(A) & \Leftrightarrow & (\exists k,p\in \mathbb{Z})\; r_d(A)=\frac{1}{2^k}\;\text{and}\;\delta_d(A)=\frac{1}{2^p}\;\text{and}\, p<k\\
                     & \Leftrightarrow &  R_k \in r_\mathcal{E}(A)\;\text{and}\;R_k\notin \delta_\mathcal{E}(A)\\
                     & \Leftrightarrow & r_\mathcal{E}(A)\neq \delta_\mathcal{E}(A)
\end{array}
$$

\item[\ref{Thm2}$\Rightarrow$\ref{Thm4}] If we set as above $R_n = \{(x, y) \in M\times M : d(x, y) \leq 2^{-n}\}$ then all asumptions of Theorem \ref{Thm2} are satisfied and then we get the result.
\end{itemize}
\end{proof}
\section{Discussion}
\subsection{Inframetric spaces}

In latency-sensitive networks, the location of servers in relation to an Internet host is a question that has aroused the interest of a large number of researchers. Many algorithms have been designed for the Internet assuming that the distance defined by the round-trip delay (RTT) is a metric. However, performance analysis of several of these algorithms shows that the triangle inequality, which is not necessarily satisfied by the RTT distance. To better understanding of the Internet structure
at the router level, Fraigniaud et al. proposed in  \cite{infra} an analytic approach based on the inframetric distance.

\begin{definition}\rm
Let $M$ be a non empty set. Let $C\geq 1$. Consider the two conditions:
\begin{itemize}
  \item[($d_3$)] $d (x, y) \leq d (x, z) + d (z, y)$,  for each $x, y, z \in M$.
  \item[($d_4$)] $d (x, y) \leq C\cdot\max\{d (x, z),d (z, y)\}$, for each $x, y, z \in M$ .
\end{itemize}

\begin{itemize}
  \item A pair $(M, d)$ satisfying axioms ($d_1$), ($d_2$) and ($d_3$) is called a metric space.
  \item A pair $(M, d)$ satisfying axioms ($d_1$), ($d_2$) and ($d_4$) is called a $C$-inframetric space.
\end{itemize}
\end{definition}

\begin{example}\label{ex3}\rm
Let $M=\mathbb{R}$ and $(R_n)_{n\in\mathbb{Z}}$ as Example \ref{ex1} and let $\mu(x,y)=\sup\{n\in \mathbb{Z}:\;(x,y)\in R_n\}$ and define $d: M\times M \rightarrow \R$ by 
$$d(x,y)=2^{-\mu(x,y)},$$
then, $(M,d)$ is a $2$-inframetric space. 
\end{example}

\begin{remark}
Since the family $(R_n)$ is reflexive, we have for all $n\in \mathbb{Z}$ 
$$R_n\subseteq R^2_n\subseteq R^3_n\subseteq\cdots$$
\end{remark}

\begin{proposition}\cite[Proposition 3.1]{ref3}\label{Prop2}\rm
Let $M$, $(R_n)_{n\in\mathbb{Z}}$ and $\delta$ as Proposition \ref{Prop1} satisfying conditions $(r_1),(r_3),(r_4)$ and
\begin{itemize}
  \item[$(r_9)$] $R^2_n \subseteq R_{n-1}$ for all $n\in \mathbb{Z}$.
\end{itemize}
Then, $\delta$ is a $2$-inframetric over $M$ and equation (\ref{eq1}) holds.
\end{proposition}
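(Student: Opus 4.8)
The statement has two parts: that $\delta$ is a semimetric for which the level-set description (\ref{eq1}) holds, and that $\delta$ satisfies the $C$-inframetric axiom $(d_4)$ with $C=2$. The plan is to obtain the first part essentially for free from Proposition \ref{Prop1}, and to concentrate the work on $(d_4)$.

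First I would check that the present hypotheses suffice to invoke Proposition \ref{Prop1}, which was stated under $(r_1)$--$(r_4)$. The only axiom not assumed here is the nesting $(r_2)$, and it is implied by $(r_9)$ together with reflexivity: since each $R_n$ is reflexive, $R_n\subseteq R_n^2$ (if $(x,y)\in R_n$, the loop $(y,y)\in R_n$ witnesses $(x,y)\in R_n\circ R_n$), so $R_n\subseteq R_n^2\subseteq R_{n-1}$. Hence $(r_1)$--$(r_4)$ all hold, and Proposition \ref{Prop1} gives at once that $\delta$ is a semimetric and that (\ref{eq1}) is valid. It remains to prove $(d_4)$.

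For the inframetric inequality I would fix $x,y,z\in M$ and argue in the language of the relations, using (\ref{eq1}) to pass between distances and membership in the $R_n$. The degenerate cases $x=y$, $x=z$, $z=y$ are immediate, since one side of $(d_4)$ then vanishes or the inequality reads $\delta(x,y)\le 2\,\delta(x,y)$. Otherwise put $p=\mu(x,z)$ and $q=\mu(z,y)$, both finite integers, and let $m=\min\{p,q\}$, so that $\max\{\delta(x,z),\delta(z,y)\}=2^{-m}$. By (\ref{eq1}), $\delta(x,z)=2^{-p}\le 2^{-m}$ forces $(x,z)\in R_m$, and likewise $(z,y)\in R_m$; thus $z$ witnesses $(x,y)\in R_m^2$. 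Now $(r_9)$ gives $(x,y)\in R_{m-1}$, and (\ref{eq1}) translates this into $\delta(x,y)\le 2^{-(m-1)}=2\cdot 2^{-m}=2\max\{\delta(x,z),\delta(z,y)\}$, which is exactly $(d_4)$ with $C=2$.

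The step I expect to carry the real weight is the passage $(x,z),(z,y)\in R_m\Rightarrow(x,y)\in R_{m-1}$: it is here that the composition $R_m^2$ and the single-index drop in $(r_9)$ produce precisely the factor $2$, and it is here that the integer indexing matters. The equality (\ref{eq1}) is doing quiet but essential work, since it encodes that the supremum defining $\mu$ is attained (because each $\alpha(x,y)$ is a downward-closed subset of $\mathbb{Z}$ bounded above when $x\neq y$, hence has a maximum); without attainment one could not conclude $(x,z),(z,y)\in R_m$ from the distance bounds, and the clean constant $C=2$ would be in jeopardy.
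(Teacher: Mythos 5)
Your proof is correct and follows essentially the same route as the paper's source for this statement (the paper gives no in-text argument and simply cites \cite[Proposition 3.1]{ref3}): obtain the semimetric properties and the level-set identity (\ref{eq1}) from Proposition \ref{Prop1}, then deduce $(d_4)$ with $C=2$ by placing both pairs $(x,z)$ and $(z,y)$ in $R_m$ at the minimal level $m$ and applying $(r_9)$ to drop one index. Your preliminary observation that $(r_2)$ follows from reflexivity together with $(r_9)$ (via $R_n\subseteq R_n^2\subseteq R_{n-1}$) is exactly the step needed to make the appeal to Proposition \ref{Prop1} legitimate, since $(r_2)$ is not among the stated hypotheses.
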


\begin{theorem}\label{Thm8}
Let $(M, d)$ be a nonempty bounded $2$-inframetric space such that $\mathcal{A}(M)$ has compact and normal structure. Then every nonexpansive mapping $T:M\rightarrow M$ has a fixed point.
\end{theorem}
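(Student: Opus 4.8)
The plan is to reduce Theorem \ref{Thm8} to the discrete fixed point result Theorem \ref{Thm2} by reversing the construction of Proposition \ref{Prop2}: I would attach to the $2$-inframetric a sequence of binary relations and check that they satisfy all the hypotheses $(r_1)$--$(r_8)$. Concretely, for each $n\in\mathbb{Z}$ set
$$R_n=\{(x,y)\in M\times M : d(x,y)\leq 2^{-n}\}.$$
Symmetry $(r_1)$ comes from $(d_2)$; the covering condition $(r_3)$ and the diagonal condition $(r_4)$ follow from $(d_1)$ together with the fact that these relations exhaust $M\times M$ as $n\to-\infty$ and shrink to $\Delta$ as $n\to+\infty$. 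The inframetric inequality $(d_4)$ is exactly what yields the strong nesting $(r_9)$: if $(x,z),(z,y)\in R_n$ then $d(x,y)\leq 2\max\{d(x,z),d(z,y)\}\leq 2\cdot 2^{-n}=2^{-(n-1)}$, so $R_n^2\subseteq R_{n-1}$, and since each $R_n$ is reflexive the Remark gives $R_n\subseteq R_n^2\subseteq R_{n-1}$, i.e.\ $(r_2)$.

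Next I would transfer the mapping and structural hypotheses. A $d$-nonexpansive $T$ is automatically a relational homomorphism $(r_8)$: if $(x,y)\in R_n$ then $d(Tx,Ty)\leq d(x,y)\leq 2^{-n}$, so $(Tx,Ty)\in R_n$. Boundedness of $(M,d)$ supplies $(r_5)$ for a suitable $R_N$. With the above choice of $R_n$ one has $\mathcal{B}(x,R_n)=B(x,2^{-n})$, and because $d$ takes only the values $2^{-k}$ or $0$ the two admissible families coincide, $\mathcal{A}(M)=\mathcal{A}_\mathcal{E}(M)$; compact structure then transfers verbatim, and normal structure passes through the dictionary $r_d(A)<\delta_d(A)\Leftrightarrow r_\mathcal{E}(A)\neq\delta_\mathcal{E}(A)$ already established in the proof that Theorems \ref{Thm4} and \ref{Thm2} are equivalent. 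Having verified $(r_1)$--$(r_8)$ on a bounded set, Theorem \ref{Thm2} then produces a fixed point of $T$.

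The step I expect to demand the most care is the transfer of normal structure, since $r_d(A)<\delta_d(A)$ and $r_\mathcal{E}(A)\neq\delta_\mathcal{E}(A)$ look like genuinely different statements; the point is that the relevant suprema and infima are attained at values of the form $2^{-k}$, so a strict inequality of real radii is equivalent to a single relation $R_k$ separating the two set-functionals. Finally, I would record that the whole argument can be short-circuited: a $2$-inframetric space satisfies $(d_1)$ and $(d_2)$ and is therefore a semimetric space, whence Theorem \ref{Thm8} is an immediate specialization of Theorem \ref{Thm4}. I nonetheless prefer the relational derivation above, since it exhibits precisely how the inframetric constant $C=2$ corresponds, via $(r_9)$, to the one-step contraction $R_n^2\subseteq R_{n-1}$ of the underlying discrete system, which is the comparative theme of the paper.
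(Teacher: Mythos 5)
Your closing ``short-circuit'' remark is, in fact, the paper's proof: a $2$-inframetric space satisfies $(d_1)$ and $(d_2)$, hence is a semimetric space, and the hypotheses of Theorem \ref{Thm8} are verbatim those of Theorem \ref{Thm4}; the paper states Theorem \ref{Thm8} without any separate argument precisely because it is this immediate specialization. So your proposal does contain a correct and complete proof, and it coincides with the paper's.

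The relational derivation that you prefer and present as the main argument, however, has a genuine gap, located exactly at the step you flagged. The claim that ``$d$ takes only the values $2^{-k}$ or $0$'' is true for the semimetric $\delta=2^{-\mu}$ manufactured from a relational system (Propositions \ref{Prop1} and \ref{Prop2}), but it is false for an arbitrary $2$-inframetric $d$, which is what Theorem \ref{Thm8} hands you. One consequence, the failure of $\mathcal{A}(M)=\mathcal{A}_\mathcal{E}(M)$, is harmless: compact structure transfers using only the inclusion $\mathcal{A}_\mathcal{E}(M)\subseteq\mathcal{A}(M)$. But the dictionary $r_d(A)<\delta_d(A)\Leftrightarrow r_\mathcal{E}(A)\neq\delta_\mathcal{E}(A)$ genuinely breaks: to get $R_n\in r_\mathcal{E}(A)\setminus\delta_\mathcal{E}(A)$ you need some $x\in A$ and $n\in\mathbb{Z}$ with $r_x(A)\leq 2^{-n}<\delta_d(A)$, i.e.\ a power of $2$ inside the window $\left[r_d(A),\delta_d(A)\right)$; since the inframetric inequality only gives $\delta_d(A)\leq 2\,r_x(A)$, this window can sit strictly between consecutive powers of $2$, and then no such $n$ exists. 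Concretely, let $M$ be a large closed disk in the Euclidean plane (a bounded metric, hence $2$-inframetric, space whose admissible sets have compact and normal structure) and let $A=\bigcap_{i=1}^{3}B(v_i,2^{-1})$ be the Reuleaux triangle over an equilateral triple $v_1,v_2,v_3$ of side $2^{-1}$. Then $A$ is $\mathcal{E}$-admissible, $\delta_d(A)=2^{-1}$, and $r_x(A)\geq 1/(2\sqrt{3})\approx 0.29>2^{-2}$ for every $x\in A$, so $r_\mathcal{E}(A)=\delta_\mathcal{E}(A)=\{R_n:n\leq 1\}$: the induced system fails $(r_7)$ even though $(M,d)$ has normal structure. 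Hence Theorem \ref{Thm2} cannot be invoked and the relational route collapses; only the specialization to Theorem \ref{Thm4} proves the statement. The power-of-two dictionary is sound only in the paper's equivalence proofs, where the distance is by construction $\{2^{-k}\}$-valued --- and this same caveat is what makes the direction Theorem \ref{Thm7} $\Rightarrow$ Theorem \ref{Thm8} of the paper's Corollary delicate, rather than the mechanical translation your write-up suggests.
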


\begin{theorem}\label{Thm7}
  Let $M$ be a nonempty bounded set, $(R_n)_{n\in\mathbb{Z}}$ a sequence of reflexive binary relations over $M$ and $T$ a self-map of $M$ such that $(r_1)$ and $(r_3)-(r_9)$ hold. Then $T$ has a fixed point.
\end{theorem}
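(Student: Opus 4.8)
The plan is to mirror the equivalence argument already used for Theorem~\ref{Thm4} and Theorem~\ref{Thm2}, but now built on the inframetric bridge of Proposition~\ref{Prop2} and concluded with Theorem~\ref{Thm8}. Concretely, starting from relational data $(R_n)_{n\in\mathbb{Z}}$ that are reflexive and satisfy $(r_1),(r_3),(r_4),(r_9)$, I would apply Proposition~\ref{Prop2} to produce $\delta(x,y)=2^{-\mu(x,y)}$ and to guarantee that $\delta$ is a $2$-inframetric on $M$ with $R_n=\{(x,y):\delta(x,y)\le 2^{-n}\}$ for every $n\in\mathbb{Z}$. Setting $d=\delta$, this identity immediately yields the ball identity $\mathcal{B}(x,R_n)=B(x,2^{-n})$, which is the backbone of all the translations below.

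Next I would transport the structural hypotheses across this bridge. Boundedness $(r_5)$ provides some $N$ with $M\times M\subseteq R_N$, i.e. $d(x,y)\le 2^{-N}$ for all $x,y$, so $(M,d)$ is a bounded $2$-inframetric space. Since $d$ takes values only in $\{2^{-n}:n\in\mathbb{Z}\}\cup\{0\}$, the two notions of admissible set coincide, $\mathcal{A}(M)=\mathcal{A}_\mathcal{E}(M)$; hence the compact structure $(r_6)$ is exactly the finite intersection property of $\mathcal{A}(M)$. For normal structure I would reuse the computation from the proof of the equivalence of Theorems~\ref{Thm4} and \ref{Thm2}: for an admissible $A$ not reduced to a point, the discreteness of the range of $d$ turns $r_\mathcal{E}(A)\ne\delta_\mathcal{E}(A)$ into the strict inequality $r_d(A)<\delta_d(A)$, so $(r_7)$ delivers normal structure of $(M,d)$.

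It then remains to see that the homomorphism property $(r_8)$ makes $T$ nonexpansive for $d$: if $(x,y)\in R_{\mu(x,y)}$ then $(Tx,Ty)\in R_{\mu(x,y)}$ by $(r_8)$, whence $\mu(Tx,Ty)\ge\mu(x,y)$ and $d(Tx,Ty)\le d(x,y)$. At this point $(M,d)$ is a nonempty bounded $2$-inframetric space whose admissible family has compact and normal structure and $T$ is nonexpansive, so Theorem~\ref{Thm8} produces a fixed point of $T$.

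The step I expect to be the main obstacle is the faithful transfer of the structural properties, in particular verifying $\mathcal{A}(M)=\mathcal{A}_\mathcal{E}(M)$ and the correct correspondence of normal structure, since both hinge on the discrete range of $\delta$ and on equation~(\ref{eq1}) of Proposition~\ref{Prop2}. I would also record a shortcut: reflexivity of $R_n$ forces $R_n\subseteq R_n^2$, so $(r_9)$ already implies $(r_2)$; thus the hypotheses of Theorem~\ref{Thm7} contain those of Theorem~\ref{Thm2}, giving an independent and shorter derivation of the same conclusion that bypasses the inframetric construction altogether.
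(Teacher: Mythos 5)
Your main argument is exactly the route the paper intends. Theorem \ref{Thm7} is stated there without an explicit proof; the corollary that follows (equivalence of Theorems \ref{Thm8} and \ref{Thm7}) is meant to be obtained by repeating the Section 3 equivalence argument with Proposition \ref{Prop2} in place of Proposition \ref{Prop1}, and your transfers of boundedness, of $\mathcal{A}(M)=\mathcal{A}_\mathcal{E}(M)$, of compact and normal structure, and of nonexpansiveness via $(r_8)$ are precisely that computation (note that Theorem \ref{Thm8}, which the paper states without proof, is itself just a special case of Theorem \ref{Thm4}, every $2$-inframetric being in particular a semimetric). Your closing shortcut is also correct, and is implicitly endorsed by the paper's own remark that reflexivity gives $R_n\subseteq R_n^2$: combined with $(r_9)$ this yields $(r_2)$, so the hypotheses of Theorem \ref{Thm7} contain those of Theorem \ref{Thm2} and the fixed point follows at once from the Khamsi--Pouzet result, with no inframetric detour. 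The shortcut is the more economical proof of the bare existence statement; the longer route is what supports the paper's stronger claim that the inframetric statement (Theorem \ref{Thm8}) and the discrete statement (Theorem \ref{Thm7}) are equivalent to one another.
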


\begin{corollary}
Theorem \ref{Thm8} and Theorem \ref{Thm7} are equivalent.
\end{corollary}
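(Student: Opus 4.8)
The plan is to mirror the proof of the equivalence between Theorem \ref{Thm4} and Theorem \ref{Thm2}, replacing Proposition \ref{Prop1} by Proposition \ref{Prop2} and the bare semimetric structure by the $2$-inframetric structure; the entire argument then reduces to recognizing that condition $(r_9)$ is exactly the relational counterpart of the $2$-inframetric inequality $(d_4)$.

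For the direction \ref{Thm8}$\Rightarrow$\ref{Thm7}, I would start from a system $(R_n)_{n\in\mathbb{Z}}$ and a self-map $T$ satisfying $(r_1)$ and $(r_3)$--$(r_9)$. Setting $d(x,y)=2^{-\mu(x,y)}$ as in Proposition \ref{Prop2}, the conditions $(r_1),(r_3),(r_4),(r_9)$ guarantee that $d$ is a $2$-inframetric and that equation (\ref{eq1}) holds, so that $\mathcal{B}(x,R_n)=B(x,2^{-n})$ and hence $\mathcal{A}_\mathcal{E}(M)=\mathcal{A}(M)$. Then $(r_5)$ makes $(M,d)$ bounded, $(r_6)$ transfers to compact structure of $\mathcal{A}(M)$, and $(r_7)$ transfers to normal structure through the same chain $r_\mathcal{E}(A)\neq\delta_\mathcal{E}(A)\Leftrightarrow r_d(A)<\delta_d(A)$ already used for Theorem \ref{Thm4}. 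Finally, by Theorem \ref{Thm1} the homomorphism condition $(r_8)$ is equivalent to $T$ being $d$-nonexpansive, so Theorem \ref{Thm8} applies and yields a fixed point.

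For the converse \ref{Thm7}$\Rightarrow$\ref{Thm8}, I would take a bounded $2$-inframetric space $(M,d)$ with $\mathcal{A}(M)$ compact and normal, a nonexpansive $T$, and define $R_n=\{(x,y):d(x,y)\leq 2^{-n}\}$. Conditions $(r_1),(r_3),(r_4)$ follow from $(d_1),(d_2)$ exactly as in Theorem \ref{Thm1}, while $(r_5)$--$(r_7)$ follow from boundedness, compactness and normal structure, and $(r_8)$ from nonexpansiveness. The decisive point is $(r_9)$: if $(x,z),(z,y)\in R_n$ then $d(x,z),d(z,y)\leq 2^{-n}$, so $(d_4)$ gives $d(x,y)\leq 2\max\{d(x,z),d(z,y)\}\leq 2^{-(n-1)}$, i.e. $(x,y)\in R_{n-1}$; thus $R_n^2\subseteq R_{n-1}$. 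Theorem \ref{Thm7} then provides the fixed point.

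The main obstacle, and the only place where the argument differs substantively from the semimetric case, is precisely this equivalence between $(r_9)$ and the constant $C=2$ in $(d_4)$. Everything else, namely the ball/admissible-set dictionary and the normal-structure translation, is inherited verbatim from the proof of the equivalence of Theorems \ref{Thm4} and \ref{Thm2}, since the distance still takes values only in $\{2^{-n}:n\in\mathbb{Z}\}\cup\{0\}$. I would also verify, as in the Remark, that $(r_9)$ together with reflexivity recovers the nestedness $(r_2)$, which is implicitly required for equation (\ref{eq1}) in Proposition \ref{Prop2}.
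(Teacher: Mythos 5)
Your proposal is exactly the argument the paper intends: the corollary carries no written proof precisely because it is meant to repeat the proof of the equivalence of Theorems \ref{Thm4} and \ref{Thm2}, with Proposition \ref{Prop2} replacing Proposition \ref{Prop1}, and you correctly isolate the only genuinely new content, namely that $(r_9)$ is the relational form of $(d_4)$ with $C=2$ (your computation $d(x,y)\leq 2\max\{d(x,z),d(z,y)\}\leq 2^{-(n-1)}$ is right), together with the recovery of $(r_2)$ from reflexivity and $(r_9)$. The direction \ref{Thm8}$\Rightarrow$\ref{Thm7} is sound as you present it, because there the semimetric $2^{-\mu}$ takes values only in $\{2^{-n}:n\in\mathbb{Z}\}\cup\{0\}$, so balls, radii, diameters and admissible sets translate faithfully.

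In the direction \ref{Thm7}$\Rightarrow$\ref{Thm8}, however, the clause ``$(r_5)$--$(r_7)$ follow from boundedness, compactness and normal structure'' hides a genuine gap; you inherited it from the paper's own one-sentence treatment of \ref{Thm2}$\Rightarrow$\ref{Thm4}, but a complete proof must address it. Here $d$ is an arbitrary $2$-inframetric, so its values are no longer confined to powers of $2$, and there is a scale mismatch: normal structure of $\mathcal{A}(M)$ gives only the real inequality $r_d(A)<\delta_d(A)$, whereas $(r_7)$ for the system $R_n=\{(x,y):d(x,y)\leq 2^{-n}\}$ demands some $n\in\mathbb{Z}$ and $x\in A$ with $r_x(A)\leq 2^{-n}<\delta_d(A)$, i.e.\ a power of $2$ inside $[r_x(A),\delta_d(A))$. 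Such a power of $2$ need not exist. Concretely, let $M\subset\mathbb{R}^2$ (Euclidean metric, hence $2$-inframetric) be the intersection of all closed balls containing the vertices of an equilateral triangle of side $0.9$; this is a compact convex set, so $\mathcal{A}(M)$ has compact and normal structure and all hypotheses of Theorem \ref{Thm8} about the space hold. Yet for $A=M$ one has $\delta_d(M)=0.9$ while $r_x(M)\geq 0.9/\sqrt{3}>1/2$ for every $x\in M$ (every point is at distance at least the circumradius from some vertex), and $[0.9/\sqrt{3},\,0.9)$ contains no power of $2$; hence $r_\mathcal{E}(M)=\delta_\mathcal{E}(M)=\{R_n:n\leq 0\}$ and $(r_7)$ fails. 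So Theorem \ref{Thm7} simply cannot be applied to the system $\{d\leq 2^{-n}\}$, and this half of the claimed equivalence needs a different idea (for instance thresholds adapted to the given space rather than the fixed dyadic ones), not just the dictionary; the transfer of $(r_6)$, by contrast, is unproblematic since $\mathcal{A}_\mathcal{E}(M)\subseteq\mathcal{A}(M)$.
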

\subsection{Metric spaces}
\begin{proposition}\cite[lemma 12, pp 185]{ref9}\label{Prop3}\rm
Let $M$, $(R_n)_{n\in\mathbb{Z}}$ and $\delta$ be as in  Proposition \ref{Prop1} satisfying conditions $(r_1),(r_3),(r_4)$ and
\begin{itemize}
  \item[$(r_{10})$] $R^3_n \subseteq R_{n-1}$ for all $n\in \mathbb{Z}$.
\end{itemize}
Then, $\delta$ is a metric over $M$ and equation (\ref{eq1}) holds.
\end{proposition}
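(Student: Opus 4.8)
The plan is to verify the three metric axioms for $\delta$ and to note that equation (\ref{eq1}) is already furnished by Proposition \ref{Prop1}. Positivity and symmetry (conditions (d1) and (d2)) are inherited verbatim from Proposition \ref{Prop1}: there $\delta(x,y)=2^{-\mu(x,y)}$ vanishes exactly when $\mu(x,y)=+\infty$, i.e.\ when $(x,y)\in\bigcap_{n}R_n=\Delta$ by $(r_4)$, and symmetry comes from $(r_1)$. So the entire burden falls on the triangle inequality $(d_3)$. Before attacking it I would record one consequence of $(r_{10})$: since each $R_n$ is reflexive, $\Delta\subseteq R_n$ gives $R_n\subseteq R_n^3\subseteq R_{n-1}$, so the monotonicity $R_n\subseteq R_{n-1}$ holds automatically and each set $\alpha(x,y)=\{n:(x,y)\in R_n\}$ is down-closed. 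Hence $\delta(x,y)\le 2^{-n}$ if and only if $(x,y)\in R_n$, and proving $(d_3)$ reduces to a lower bound on $\mu(x,z)$ in terms of $\mu(x,y)$ and $\mu(y,z)$.

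The first computational step is a crude composition estimate. Fix $x,y,z$ and set $p=\mu(x,y)$, $q=\mu(y,z)$ and $m=\min\{p,q\}$. By monotonicity both $(x,y)$ and $(y,z)$ lie in $R_m$, so $(x,z)\in R_m\circ R_m=R_m^2$; using $\Delta\subseteq R_m$ once more we get $R_m^2\subseteq R_m^3$, and then $(r_{10})$ yields $(x,z)\in R_m^3\subseteq R_{m-1}$. Thus $\mu(x,z)\ge m-1$, that is
\[
\delta(x,z)\ \le\ 2^{-(m-1)}\ =\ 2\,\max\{\delta(x,y),\delta(y,z)\}.
\]
This already gives the inframetric inequality $(d_4)$ with constant $C=2$, exactly as in Proposition \ref{Prop2}. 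Note, however, that it consumed only a \emph{two}-fold composition, so the third factor in $(r_{10})$ has not yet been exploited.

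The step I expect to be the genuine obstacle is promoting this \emph{multiplicative} bound to the \emph{additive} triangle inequality, since $2\max\{a,b\}\ge a+b$ and so collapsing both pairs to the common level $R_m$ at one stroke is too lossy. The remedy I would pursue is \emph{not} to coarsen $(x,y)$ and $(y,z)$ to the same level simultaneously, but to thread $x$ to $z$ through a cascade of intermediate relations whose levels are calibrated to the dyadic values of $\delta$ met along the way, and then to \emph{sum} the resulting one-step estimates; here the extra composition factor of $(r_{10})$ (against the two-fold $(r_9)$ of Proposition \ref{Prop2}) together with the fact that $\delta$ takes only the dyadic values $\{2^{-n}\}$ should be precisely what lets the geometric tail $\sum_{k\ge 1}2^{-k}$ supply the additive slack with the correct constant. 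Controlling $\delta(x,z)$ by such a summation, rather than by the one-shot maximum, is the crux of the matter, and it is exactly this passage that is carried out in the cited Lemma~12 of \cite{ref9}; once it is in hand, $(d_3)$ follows and equation (\ref{eq1}) is inherited unchanged from Proposition \ref{Prop1}.
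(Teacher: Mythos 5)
Your preliminary steps are fine (positivity, symmetry, the down-closure of $\alpha(x,y)$ forced by reflexivity plus $(r_{10})$, and the $2$-inframetric bound), and, for calibration, the paper itself offers no argument beyond the bare citation of Kelley. But the step you defer to Kelley's Lemma~12 --- promoting the multiplicative bound to the additive inequality $(d_3)$ \emph{for the function $\delta=2^{-\mu}$ itself} --- is not what that lemma does, and in fact it cannot be done, because $\delta$ need not satisfy the triangle inequality under $(r_1),(r_3),(r_4),(r_{10})$. Take $M=\{x,y,z\}$, fix $p\in\mathbb{Z}$, and set $R_n=M\times M$ for $n\leq p-1$, $R_p=\Delta\cup\{(x,y),(y,x),(y,z),(z,y)\}$, $R_{p+1}=\Delta\cup\{(y,z),(z,y)\}$, and $R_n=\Delta$ for $n\geq p+2$. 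All hypotheses hold, including $R_n^3\subseteq R_{n-1}$ for every $n$ (the only nontrivial cases are $n=p$, where $R_p^3\subseteq M\times M=R_{p-1}$, and $n=p+1$, where $R_{p+1}^3=R_{p+1}\subseteq R_p$). Yet $\delta(x,y)=2^{-p}$ and $\delta(y,z)=2^{-(p+1)}$, while $(x,z)\notin R_p$, so $\delta(x,z)=2^{-(p-1)}>2^{-p}+2^{-(p+1)}$. So no cascade or summation scheme can establish $(d_3)$ for $\delta$; the heuristic that the geometric tail $\sum_{k\geq 1}2^{-k}$ supplies the additive slack is precisely the point at which the plan fails.

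What Kelley's metrization lemma actually provides is a \emph{different} function: with $f(x,y)=2^{-\mu(x,y)}$ (your $\delta$) it defines $d(x,y)=\inf\sum_{i}f(x_i,x_{i+1})$ over finite chains from $x$ to $y$, which satisfies the triangle inequality by construction; the triple-composition hypothesis enters only to prove the chain estimate $f(x_0,x_{k+1})\leq 2\sum_{i=0}^{k}f(x_i,x_{i+1})$, whence $\tfrac{1}{2}f\leq d\leq f$ and only the sandwich $R_n\subseteq\{(x,y):d(x,y)<2^{-n}\}\subseteq R_{n-1}$ --- the factor $2$ is irremovable, as the example above shows. Thus equation (\ref{eq1}) holds exactly for $\delta$, which is not a metric, while the triangle inequality holds for Kelley's $d$, for which (\ref{eq1}) holds only up to one index of slack; the statement as given conflates the two functions, and your write-up inherits the conflation by asserting that the missing passage ``is exactly carried out'' in the cited lemma. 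The correct conclusion extractable from Kelley is the existence of a metric $d$ with $\tfrac{1}{2}\delta\leq d\leq\delta$ together with the sandwich inclusions --- sufficient for the downstream equivalence of the fixed point theorems, but strictly weaker than the proposition you set out to prove.
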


\begin{theorem}\label{Thm5}\cite[Theorem 3.2]{ref10}
Let $(M, d)$ be a nonempty bounded metric space such that $\mathcal{A}(M)$ has compact and normal structure. Then every nonexpansive mapping $T:M\rightarrow M$ has a fixed point.
\end{theorem}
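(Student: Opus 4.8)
The plan is to obtain Theorem \ref{Thm5} as a straightforward specialization of results already established, so that no new machinery is needed. By Definition \ref{Def1} a semimetric is a map satisfying $(d_1)$ and $(d_2)$, and a metric is precisely a semimetric which in addition satisfies the triangle inequality $(d_3)$; hence every metric space $(M,d)$ is \emph{a fortiori} a semimetric space. Moreover the ball $B(x,r)$, the operator $cov$, the family $\mathcal{A}(M)$, the quantities $r_d(A)$ and $\delta_d(A)$, and the notions of compact and normal structure in Definition \ref{def1} are all phrased in terms of $d$ alone and never invoke $(d_3)$. Consequently the hypotheses of Theorem \ref{Thm5} are verbatim an instance of those of Theorem \ref{Thm4}, and the conclusion follows at once by invoking Theorem \ref{Thm4}. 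In the comparative spirit of the paper the same conclusion is reachable along the discrete route: Proposition \ref{Prop3} turns $d$ into a family $(R_n)_{n\in\mathbb{Z}}$ satisfying $(r_1)$, $(r_3)$, $(r_4)$ and $(r_{10})$; since reflexivity yields $R_n\subseteq R_n^{2}\subseteq R_n^{3}\subseteq R_{n-1}$, condition $(r_{10})$ forces $(r_2)$, so all of $(r_1)$--$(r_4)$ hold, the nonexpansiveness of $T$ gives $(r_8)$, and one may apply Theorem \ref{Thm2} after transporting the compact and normal structure exactly as in the proof that Theorem \ref{Thm4} and Theorem \ref{Thm2} are equivalent.

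For a reader who prefers a self-contained argument I would unwind the engine behind Theorem \ref{Thm4}, namely a Kirk-type minimality scheme. First I would let $\Sigma$ be the family of nonempty sets $A\in\mathcal{A}(M)$ with $cov\big(T(A)\big)\subseteq A$. This family is nonempty, as it contains $cov(M)$, and the intersection of any descending chain of its members is again a nonempty admissible set lying in $\Sigma$, the nonemptiness being exactly where compact structure (the finite intersection property of $\mathcal{A}(M)$) enters. Zorn's lemma then produces a minimal element $K\in\Sigma$. A short computation shows that $cov\big(T(K)\big)$ is itself a nonempty member of $\Sigma$ contained in $K$, so minimality upgrades the inclusion to the equality $cov\big(T(K)\big)=K$.

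The crux, and the step I expect to be the main obstacle, is the normal-structure argument forcing $K$ to be a singleton. Writing $r=r_d(K)$, I would consider the Chebyshev-type center set
\[
C=\{x\in K:\ r_x(K)\le r\}=K\cap\bigcap_{y\in K}B(y,r),
\]
which is admissible and, being the intersection over $\varepsilon>0$ of the nonempty admissible sets $\{x\in K:\ r_x(K)\le r+\varepsilon\}$, is nonempty by compact structure. Using that $T$ is nonexpansive together with $cov\big(T(K)\big)=K$ one checks $T(C)\subseteq C$, so $C\in\Sigma$ and minimality gives $C=K$; but then $r_x(K)=r$ for every $x\in K$, whence $\delta_d(K)=\sup_{x\in K}r_x(K)=r=r_d(K)$, contradicting normal structure unless $K$ is reduced to a point, which is the sought fixed point. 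The only delicate items are the nonemptiness of $C$, for which the finite intersection property is indispensable, and the inclusion $T(C)\subseteq C$; it is worth noting that neither uses $(d_3)$, and this is precisely why the identical scheme covers the semimetric, $2$-inframetric, and metric cases of Theorems \ref{Thm4}, \ref{Thm8} and \ref{Thm5} uniformly.
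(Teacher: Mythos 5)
The paper never proves Theorem \ref{Thm5} at all: it is imported verbatim from Penot \cite[Theorem 3.2]{ref10}, just as Theorem \ref{Thm4} is imported from Kirk--Kang \cite[Theorem 3]{ref4}. Your main route is therefore different from, and more informative than, what the paper does. Your first observation is correct and is the cheapest derivation available inside the paper's own framework: a metric is in particular a semimetric, and balls, $cov$, $\mathcal{A}(M)$, $r_d$, $\delta_d$ and the compact/normal structure of Definition \ref{def1} are all defined without $(d_3)$, so Theorem \ref{Thm5} is literally the restriction of Theorem \ref{Thm4} to metric spaces. Your self-contained third paragraph then reconstructs, essentially correctly, the Penot/Kirk minimal-set argument underlying both theorems: Zorn's lemma on the family $\Sigma$ of nonempty admissible $T$-invariant sets (chains have nonempty intersection by compact structure), minimality forcing $cov(T(K))=K$, nonemptiness of the center $C$ by the finite intersection property applied to the sets $\{x\in K:\,r_x(K)\le r+\varepsilon\}$, the inclusion $T(C)\subseteq C$ via $T(K)\subseteq B(Tx,r)$ hence $K=cov(T(K))\subseteq B(Tx,r)$ for $x\in C$, and the final contradiction $r_d(K)=\delta_d(K)$ with normal structure. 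This is the standard proof, and, as you rightly stress, it nowhere uses the triangle inequality, which is exactly why the same argument covers Theorems \ref{Thm4}, \ref{Thm8} and \ref{Thm5} uniformly.

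One detail in your middle ``discrete route'' is wrong, though it does not damage the main argument. Proposition \ref{Prop3} goes in the opposite direction to what you state: it produces a metric from a relational system satisfying $(r_{10})$, not a relational system from a metric. Moreover, starting from a metric $d$ and setting $R_n=\{(x,y):\,d(x,y)\le 2^{-n}\}$, condition $(r_{10})$ generally fails: three chained steps of length at most $2^{-n}$ only give $d(x,w)\le 3\cdot 2^{-n}$, which exceeds $2^{-(n-1)}$ (take $d(x,y)=|x-y|$ on $\mathbb{R}$). The route is still salvageable, because Theorem \ref{Thm2} requires only $(r_1)$--$(r_8)$, which these dyadic relations do satisfy --- exactly as in the paper's proof that Theorem \ref{Thm2} implies Theorem \ref{Thm4} --- or alternatively one can use balls of radius $3^{-n}$ to restore $(r_{10})$. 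But as written, the claim that Proposition \ref{Prop3} yields relations satisfying $(r_{10})$ from a metric is false and should be removed or repaired.
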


\begin{theorem}\label{Thm6}
  Let $M$ be a nonempty bounded set, $(R_n)_{n\in\mathbb{Z}}$ a sequence of reflexive binary relations over $M$ and $T$ a self-map of $M$ such that $(r_1)$, $(r_3)-(r_8)$ and $(r_{10})$ hold. Then $T$ has a fixed point.
\end{theorem}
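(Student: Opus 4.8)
The plan is to reduce Theorem~\ref{Thm6} to Penot's metric result, Theorem~\ref{Thm5}, following the same scheme that made Theorem~\ref{Thm4} and Theorem~\ref{Thm2} equivalent, but feeding in the construction of Proposition~\ref{Prop3} rather than that of Proposition~\ref{Prop1}. The role of the new hypothesis $(r_{10})$ is precisely to upgrade the semimetric $\delta$ to a genuine metric, so that Theorem~\ref{Thm5} becomes applicable.

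First I would observe that reflexivity together with $(r_{10})$ already yields the nesting $R_n\subseteq R_{n-1}$: since each $R_n$ is reflexive, $R_n\subseteq R_n^3\subseteq R_{n-1}$. Hence the family is decreasing, and for every pair $x\neq y$ the set $\alpha(x,y)=\{n: (x,y)\in R_n\}$ is an initial segment of $\mathbb{Z}$ bounded above by $(r_4)$, so the supremum $\mu(x,y)$ is attained and $(x,y)\in R_{\mu(x,y)}$. I then set $d(x,y)=2^{-\mu(x,y)}$ as in Proposition~\ref{Prop1}. Because $(r_1),(r_3),(r_4)$ and $(r_{10})$ hold, Proposition~\ref{Prop3} guarantees that $d$ is a metric on $M$ and that $R_n=\{(x,y): d(x,y)\leq 2^{-n}\}$ for every $n$; in particular $\mathcal{B}(x,R_n)=B(x,2^{-n})$.

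Next I would transfer the structural hypotheses to $(M,d)$. Boundedness of $M$ gives $N$ with $M\times M\subseteq R_N$, i.e.\ $d\leq 2^{-N}$, so $(M,d)$ is bounded. Since $d$ takes values only in $\{2^{-n}: n\in\mathbb{Z}\}\cup\{0\}$, every ball is an $\mathcal{E}$-ball and conversely, whence $\mathcal{A}(M)=\mathcal{A}_\mathcal{E}(M)$; the finite intersection property coming from $(r_6)$ therefore passes verbatim to $\mathcal{A}(M)$, giving compact structure. For normal structure I would reuse the radius/diameter dictionary of the Theorem~\ref{Thm4}$\Leftrightarrow$Theorem~\ref{Thm2} proof: for a non-singleton $A\in\mathcal{A}(M)$, the condition $r_\mathcal{E}(A)\neq\delta_\mathcal{E}(A)$ translates under $R_k\leftrightarrow 2^{-k}$ into $r_d(A)<\delta_d(A)$, so $(r_7)$ delivers normal structure. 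Finally, $(r_8)$ makes $T$ nonexpansive for $d$: from $(x,y)\in R_{\mu(x,y)}$ we get $(Tx,Ty)\in R_{\mu(x,y)}$, hence $\mu(Tx,Ty)\geq\mu(x,y)$ and $d(Tx,Ty)\leq d(x,y)$, exactly as in Theorem~\ref{Thm1}. Theorem~\ref{Thm5} now applies and produces a fixed point of $T$ in $M$.

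The only genuinely new point compared with the earlier equivalences is the promotion of $\delta$ to a true metric, and the triangle inequality is exactly where $(r_{10})$ is consumed; I expect this to be the delicate step, but it is already encapsulated in Proposition~\ref{Prop3}. Everything else — the coincidence of the admissible families and the translation of compact and normal structure — is identical to the semimetric computation carried out for Theorem~\ref{Thm4}$\Leftrightarrow$Theorem~\ref{Thm2}, so no additional obstacle is anticipated.
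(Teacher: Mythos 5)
Your proposal is correct and takes essentially the same route the paper intends: the paper states Theorem~\ref{Thm6} without a separate written proof, deriving it (as the subsequent corollary indicates) exactly by your reduction --- use reflexivity plus $(r_{10})$ to recover the nesting $(r_2)$, metrize via Proposition~\ref{Prop3}, transfer boundedness, compact and normal structure, and nonexpansiveness word-for-word from the Theorem~\ref{Thm4}$\Leftrightarrow$Theorem~\ref{Thm2} equivalence, and then invoke Theorem~\ref{Thm5}. Your fleshing-out of the details (attainment of $\mu$, coincidence of the ball families, the radius/diameter dictionary) is accurate and consistent with the paper's Section~3 computation.
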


\begin{corollary}
Theorem \ref{Thm5} and Theorem \ref{Thm6}  are equivalent.
\end{corollary}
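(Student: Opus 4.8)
The plan is to establish the two implications separately, following verbatim the scheme already used for the equivalence of Theorem~\ref{Thm4} and Theorem~\ref{Thm2}, but with Proposition~\ref{Prop3} in place of Proposition~\ref{Prop1}. The role previously played by $(r_2)$ is now taken over by the stronger condition $(r_{10})$, which is precisely what Proposition~\ref{Prop3} requires in order to upgrade the gauge $\delta(x,y)=2^{-\mu(x,y)}$ from a $2$-inframetric to a genuine metric. Note first that $(r_{10})$ together with reflexivity gives $R_n\subseteq R_n^3\subseteq R_{n-1}$, so the family is automatically nested.

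For \ref{Thm5}$\Rightarrow$\ref{Thm6} I would start from the hypotheses of Theorem~\ref{Thm6}, so that $(r_1),(r_3),(r_4)$ and $(r_{10})$ hold, and apply Proposition~\ref{Prop3} to obtain that $\delta$ is a metric satisfying equation~(\ref{eq1}). From (\ref{eq1}) one gets $\mathcal{B}(x,R_n)=B(x,2^{-n})$, and since $\delta$ takes values in $\{2^{-n}:n\in\mathbb{Z}\}\cup\{0\}$ the two families of admissible sets coincide, $\mathcal{A}(M)=\mathcal{A}_\mathcal{E}(M)$. I would then carry boundedness over from $(r_5)$, the compact structure from $(r_6)$, and the normal structure from $(r_7)$ through the dyadic equivalence $r_d(A)<\delta_d(A)\Leftrightarrow r_\mathcal{E}(A)\neq\delta_\mathcal{E}(A)$ already established in the proof of the equivalence of Theorems~\ref{Thm4} and \ref{Thm2}. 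Finally $T$ is $\delta$-nonexpansive by the argument of Theorem~\ref{Thm1}, (i)$\Rightarrow$(ii): from $(x,y)\in R_{\mu(x,y)}$ and $(r_8)$ one gets $(Tx,Ty)\in R_{\mu(x,y)}$, hence $\mu(Tx,Ty)\geq\mu(x,y)$ and $\delta(Tx,Ty)\leq\delta(x,y)$. Theorem~\ref{Thm5} then produces a fixed point. This direction is essentially mechanical once Proposition~\ref{Prop3} is invoked.

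For the converse \ref{Thm6}$\Rightarrow$\ref{Thm5} I would start from a bounded metric space $(M,d)$ with $\mathcal{A}(M)$ compact and normal and a nonexpansive $T$, and manufacture a relational system to which Theorem~\ref{Thm6} applies. For any choice $R_n=\{(x,y):d(x,y)\leq\rho_n\}$ with $\rho_n\downarrow 0$ and $\rho_n\uparrow\infty$, the conditions $(r_1)$ (symmetry), $(r_3)$ (union, using boundedness), $(r_4)$ (intersection, using $d(x,y)=0\Leftrightarrow x=y$), $(r_5)$ and $(r_8)$ (homomorphism, from nonexpansiveness) are immediate, and it then remains only to apply Theorem~\ref{Thm6} and read off the fixed point of $T$.

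The hard part will be the verification of $(r_{10})$, namely $R_n^3\subseteq R_{n-1}$, in this converse. The triangle inequality $(d_3)$ only yields $d(x,y)\leq 3\rho_n$ for $(x,y)\in R_n^3$, so the naive dyadic choice $\rho_n=2^{-n}$ fails because $3\cdot 2^{-n}>2^{-(n-1)}$; this is exactly the mismatch that forces the passage from $(r_9)$ to $(r_{10})$. I expect one must coarsen the scale, for instance $\rho_n=c^{-n}$ with $c\geq 3$ so that $3\rho_n\leq\rho_{n-1}$, which is how $(d_3)$ feeds into $(r_{10})$ and mirrors the chaining behind Proposition~\ref{Prop3}. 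The subtler consequence is that, once the radii $\rho_n$ no longer match the actual values of $d$, one only has $\mathcal{A}_\mathcal{E}(M)\subseteq\mathcal{A}(M)$: compactness still transfers downward to this subfamily, but normal structure must be reconsidered, since $r_\mathcal{E}(A)\neq\delta_\mathcal{E}(A)$ now demands that some $\rho_n$ fall strictly between $r_d(A)$ and $\delta_d(A)$. Guaranteeing that the discretization of radii does not destroy the normal structure of $\mathcal{A}(M)$ is the step I would expect to require the most care.
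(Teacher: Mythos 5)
Your first direction, \ref{Thm5}$\Rightarrow$\ref{Thm6}, is correct and is exactly the route the paper intends (it states this corollary without any proof, implicitly repeating the semimetric argument): apply Proposition \ref{Prop3} to get the metric $\delta=2^{-\mu}$ together with equation (\ref{eq1}), identify $\mathcal{B}(x,R_n)=B(x,2^{-n})$ so that $\mathcal{A}(M)=\mathcal{A}_\mathcal{E}(M)$, transfer boundedness, compactness and normality through the dyadic dictionary, check nonexpansiveness via $(r_8)$, and invoke Theorem \ref{Thm5}. You are also right, and in fact more careful than the paper, that the converse cannot simply copy the move ``set $R_n=\{(x,y):d(x,y)\le 2^{-n}\}$'': for this choice the triangle inequality only bounds a three-step chain by $3\cdot 2^{-n}>2^{-(n-1)}$, and $(r_{10})$ genuinely fails (take $0,1,2,3$ in $[0,3]\subset\mathbb{R}$ with $n=0$: $(0,3)\in R_0^3\setminus R_{-1}$). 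This mismatch is real and is glossed over by the paper.

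However, your proposed repair does not close the gap, and the issue you defer as ``the step requiring the most care'' is fatal to the rescaling approach, not a technicality. With $\rho_n=c^{-n}$, $c\ge 3$, condition $(r_7)$ demands, for every $\mathcal{E}$-admissible $A$ not reduced to a point, some $n$ with $r_d(A)\le c^{-n}<\delta_d(A)$, i.e.\ a power of $c$ inside $[r_d(A),\delta_d(A))$; metric normal structure only gives $r_d(A)<\delta_d(A)$, and this interval has multiplicative length possibly as small as $2$, while consecutive admissible radii jump by the factor $c\ge 3$. Concretely, take $M=[0,1]$ with the usual metric (bounded, compact and normal structure, so every hypothesis of Theorem \ref{Thm5} holds) and $c=3$. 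The set $A=[0.4,\,0.64]=\mathcal{B}\bigl(0.64-\tfrac{1}{3},R_1\bigr)\cap\mathcal{B}\bigl(0.4+\tfrac{1}{3},R_1\bigr)$ is $\mathcal{E}$-admissible and not a singleton, with $r_d(A)=0.12$ and $\delta_d(A)=0.24$; since $3^{-2}<0.12$ and $3^{-1}>0.24$, no power of $\tfrac{1}{3}$ lies in $[0.12,0.24)$, hence $r_\mathcal{E}(A)=\delta_\mathcal{E}(A)=\{R_n:n\le 1\}$ and $(r_7)$ fails for your system. So Theorem \ref{Thm6} cannot be applied to the relational system you construct, and the implication \ref{Thm6}$\Rightarrow$\ref{Thm5} remains unproved in your proposal. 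Snowflaking $d\mapsto d^{\log_3 2}$ does not help either, since it is the $c=3$ rescaling in disguise. Closing this direction requires a genuinely different idea for reconciling $(r_{10})$ with the transfer of normal structure; the paper itself offers none.
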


\subsection{Ultrametric spaces}

\begin{definition}\cite{ref11}
Assume that $T:M\rightarrow M$ is a map and $B=\mathcal{B}(x,R_n)\subset M$ an $\mathcal{E}$-ball of $M$. We say that $B$ is \textit{minimal $T$-invariant} if : 
\begin{itemize}
  \item[(i)] $T(B)\subseteq B$, and
  \item[(ii)] $(y,Ty)\in R_n\setminus R_{n+1}$ for each $y\in B$. 
\end{itemize}
\end{definition}

\begin{remark}
Note that this definition is exactly the classical definition for standard ultrametric space. Indeed, since  $(y,Ty)\in R_n\setminus R_{n+1}$ we have 
$$\frac{1}{2^{n+1}}<d(y,Ty)\leq \frac{1}{2^n}$$ 
and since $d(x,y) \in \{\frac{1}{2^n}: n\in \mathbb{Z}\}\cup\{0\}$ we have $d(y,Ty)=\frac{1}{2^n}$.   
\end{remark}

\begin{definition}\cite{ref14,ref13}
An ultrametric space $(M,d)$ is said to be spherically complete if and only if for any sequence $\{B(x_n,r_n)\}_{n\in \mathbb{N}}$ of closed balls such that $r_{n+1}\leq r_n$ and $B(x_{n+1},r_{n+1})\subset B(x_n,r_n)$, for any $n\in\mathbb{N}$ we have $\bigcap_{n\in \mathbb{N}} B(x_n,r_n)\neq \varnothing$.
\end{definition}

\begin{theorem}\label{KS}\cite{ref11,ref12}
Suppose $M$ is a spherically complete ultrametric space and $T:M\rightarrow M$ is a nonexpansive map. Then every ball of the form $B(x,d (x,Tx))$ contains either a fixed point of $T$ or a minimal $T$-invariant ball. 
\end{theorem}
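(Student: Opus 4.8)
The plan is to exploit the rigid geometry of ultrametric balls together with spherical completeness in a Zorn-type minimality argument over the balls $B\bigl(y,d(y,Ty)\bigr)$. Throughout I would write $r_y:=d(y,Ty)$ and $B_y:=B(y,r_y)$. If the given center satisfies $Tx=x$, then $B\bigl(x,d(x,Tx)\bigr)=\{x\}$ already contains the fixed point $x$, so assume $x\neq Tx$ and fix $r:=r_x$.

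First I would isolate the structural lemma on which everything rests: \emph{if $y\in B_x$ then $T(B_x)\subseteq B_x$, $B_y\subseteq B_x$, and $r_y\leq r$.} From $d(x,y)\leq r$ and nonexpansiveness, $d(Tx,Ty)\leq d(x,y)\leq r$; the strong triangle inequality then gives $d(x,Ty)\leq\max\{d(x,Tx),d(Tx,Ty)\}\leq r$ and $d(y,Ty)\leq\max\{d(y,x),d(x,Ty)\}\leq r$. Hence $Ty\in B_x$ (so $T(B_x)\subseteq B_x$) and $r_y\leq r$. Because every point of an ultrametric ball is a center, $B(y,r)=B(x,r)=B_x$, and $r_y\leq r$ yields $B_y=B(y,r_y)\subseteq B(y,r)=B_x$. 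This is the step where the ultrametric hypothesis (as opposed to the mere semimetric or $2$-inframetric axioms) is indispensable, and it is the crux of the whole argument.

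Next I would order by inclusion the family $\mathcal{F}_x:=\{B_y:\,y\in B_x,\ Ty\neq y\}$; by the lemma every member lies inside $B_x$, and $B_x\in\mathcal{F}_x$ since $x\neq Tx$. To invoke Zorn's lemma for a \emph{minimal} element I must bound chains from below. Given a chain $\{B_{y_i}\}$ of nested balls, spherical completeness furnishes a point $z\in\bigcap_i B_{y_i}\subseteq B_x$. If $Tz=z$ then a fixed point has been produced inside $B_x$ and we are done; otherwise $z\in B_{y_i}$ for every $i$, so the lemma gives $B_z\subseteq B_{y_i}$ for all $i$, i.e.\ $B_z\in\mathcal{F}_x$ is a lower bound for the chain. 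Thus either a fixed point appears along the way or Zorn delivers a minimal ball $B_w\in\mathcal{F}_x$.

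Finally I would verify that such a minimal $B_w$ is minimal $T$-invariant. Invariance $T(B_w)\subseteq B_w$ is part of the lemma. For the constancy condition, take any $y\in B_w$; if $Ty=y$ we again have a fixed point in $B_x$, so assume $Ty\neq y$, whence $B_y\in\mathcal{F}_x$ and $B_y\subseteq B_w$, and minimality forces $B_y=B_w$. Then $w\in B_y=B(y,r_y)$ and $Tw\in T(B_w)\subseteq B_w=B_y$, so the strong triangle inequality gives $r_w=d(w,Tw)\leq\max\{d(w,y),d(y,Tw)\}\leq r_y\leq r_w$, forcing $r_y=r_w$; that is, $d(y,Ty)=d(w,Tw)$ for every $y\in B_w$. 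This is exactly condition (ii), the remark following the definition identifying it with $(y,Ty)\in R_n\setminus R_{n+1}$ in the discrete formulation. The only genuine difficulties are the structural lemma and this last rigidity computation; once they are in place the spherical-completeness/Zorn bookkeeping is routine.
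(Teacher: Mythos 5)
Your proposal is correct, but note that the paper does not actually prove Theorem \ref{KS}: the result is imported from \cite{ref11,ref12}, so the only meaningful comparison is with the classical argument of Kirk--Shahzad and Petalas--Vidalis, and yours is precisely that argument: the ultrametric lemma that $y\in B_x$ forces $T(B_x)\subseteq B_x$, $r_y\le r_x$ and $B_y\subseteq B_x$ (because every point of an ultrametric ball is a center), then Zorn's lemma on $\mathcal{F}_x=\{B_y:\ y\in B_x,\ Ty\neq y\}$ with spherical completeness bounding chains, and finally the rigidity computation $d(y,Ty)=d(w,Tw)$ on the minimal ball, which is exactly condition (ii) of the paper's definition after the remark translating $(y,Ty)\in R_n\setminus R_{n+1}$ into $d(y,Ty)=2^{-n}$. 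All three steps check out.

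Two points should be tightened. First, the paper's definition of spherical completeness is stated for \emph{sequences} of nested balls, whereas your Zorn step applies it to an arbitrary chain $\{B_{y_i}\}$. In an ultrametric space the two formulations are equivalent, but this requires an argument: replace each ball in the chain by the same set with radius equal to its diameter (in an ultrametric space $B(x,r)=B\bigl(x,\operatorname{diam}B(x,r)\bigr)$, so diameters are monotone along the chain), extract a nested sequence of balls in the chain whose diameters decrease to the infimum of all diameters, and verify via the strong triangle inequality that the intersection of this sequence lies inside every ball of the chain. The cited sources take the chain form as the definition, so relative to them your proof is complete; relative to the paper's sequential definition, this reduction is a genuine missing step, albeit a standard one. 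Second, the phrase ``if $Tz=z$ we are done'' occurs inside the verification of Zorn's hypothesis, where one cannot exit the argument; the clean fix is to assume from the outset that $B_x$ contains no fixed point of $T$, after which every chain in $\mathcal{F}_x$ has a lower bound and Zorn produces the minimal $T$-invariant ball. Both repairs are routine; the substance of your proof is right.
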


\begin{remark}
\begin{itemize}
  \item In \cite{ref15}, the authors showed that the spherical completness of an ultrametric space is exactely the compactness in the sense of Penot (see \cite[Theorem 5]{ref15}), that is the compactness of the collection of admissible sets.
  \item Since the diameter of a closed ball in an ultrametric space can never exceed its radius, $\mathcal{A}(M)$ is never normal for an ultrametric space $M$.
\end{itemize} 
\end{remark}

Take into account the above discussion, we obtain, the equivalence result of Theorem \ref{KS} for a family of equivalence relations

\begin{theorem}
  Let $M$ be a nonempty set, $(R_n)_{n\in\mathbb{Z}}$ a sequence of equivalence relations over $M$ and $T$ a self-map of $M$ such that $(r_3)-(r_5)$ and $(r_8),(r_{10})$ hold. Then every $\mathcal{E}$-ball contains either a fixed point of $T$ or a minimal $T$-invariant ball. 
\end{theorem}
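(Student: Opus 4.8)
The plan is to run, in the discrete language, the proof of the ultrametric fixed point theorem (Theorem \ref{KS}). First I would record the simplifications forced by the hypothesis that each $R_n$ is an equivalence relation: symmetry $(r_1)$ is automatic, and reflexivity together with transitivity gives $R_n\circ R_n=R_n=R_n\circ R_n\circ R_n$, so condition $(r_{10})$, namely $R_n^3\subseteq R_{n-1}$, collapses to the nesting $R_n\subseteq R_{n-1}$, i.e. $(r_2)$. Hence Proposition \ref{Prop3} applies and $\delta(x,y)=2^{-\mu(x,y)}$ is a metric; in fact, taking $r=\min\{\mu(x,z),\mu(z,y)\}$ and using transitivity of $R_r$ on $(x,z),(z,y)\in R_r$ upgrades this to $\delta(x,y)\le\max\{\delta(x,z),\delta(z,y)\}$, so $\delta$ is an ultrametric. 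By $(r_8)$ and the computation in the proof of Theorem \ref{Thm1}, $T$ is $\delta$-nonexpansive, i.e. $\mu(Tx,Ty)\ge\mu(x,y)$, and by boundedness $(r_5)$ there is $N$ with $\mu\ge N$ everywhere, so $\mu$ takes values in $\{N,N+1,\dots\}\cup\{\infty\}$.

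Matching the ball $B(x,d(x,Tx))$ of Theorem \ref{KS}, I would fix $x$ and set $m_0=\mu(x,Tx)$. If $m_0=\infty$ then $(x,Tx)\in\bigcap_n R_n=\Delta$ by $(r_4)$, so $x=Tx$ and we are done; thus assume $m_0\in\mathbb{Z}$ and put $B_0=\mathcal{B}(x,R_{m_0})$. Since $(x,Tx)\in R_{m_0}$ we have $\mathcal{B}(Tx,R_{m_0})=B_0$, and $(r_8)$ yields $T(B_0)\subseteq\mathcal{B}(Tx,R_{m_0})=B_0$, so $B_0$ is $T$-invariant. Applying $(r_8)$ and symmetry to the chain $(y,x),(x,Tx),(Tx,Ty)\in R_{m_0}$ and using transitivity of $R_{m_0}$ shows $\mu(y,Ty)\ge m_0$ for every $y\in B_0$. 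More generally, for non-fixed $y$ write $m(y)=\mu(y,Ty)$ and $B(y)=\mathcal{B}(y,R_{m(y)})$; the same two facts show each $B(y)$ is $T$-invariant and that $z\in B(y)$ forces $m(z)\ge m(y)$ and $B(z)\subseteq B(y)$.

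If $B_0$ contains a fixed point we are done, so assume it does not; then $m(y)$ is finite for all $y\in B_0$. I would then descend. Given a $T$-invariant ball $B_k=B(y_k)=\mathcal{B}(y_k,R_{m_k})$, either the displacement $y\mapsto m(y)$ is constant on $B_k$, in which case $(y,Ty)\in R_{m_k}\setminus R_{m_k+1}$ for every $y\in B_k$, so $B_k$ satisfies condition (ii) and is a minimal $T$-invariant ball and we stop; or there is $y_{k+1}\in B_k$ with $m(y_{k+1})>m_k$, and we set $B_{k+1}=B(y_{k+1})\subseteq B_k$. The decisive feature of the discrete setting is that the radii $m_k$ are integers, so an infinite descent forces a strictly increasing sequence $m_0<m_1<\cdots\to\infty$ and a nested sequence of $T$-invariant balls $B_0\supseteq B_1\supseteq\cdots$.

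It remains to exclude an infinite descent that never reaches a minimal ball, and this is the crux. Invoking the compact structure of $\mathcal{A}_\mathcal{E}(M)$ — which, as recalled after Theorem \ref{KS}, is exactly the spherical completeness hypothesis of that theorem — the finite intersection property gives $z\in\bigcap_k B_k\ne\varnothing$; since $m_k\to\infty$, condition $(r_4)$ yields $\bigcap_k R_{m_k}=\Delta$, whence $\bigcap_k B_k=\{z\}$. As each $B_k$ is $T$-invariant and contains $z$, we get $Tz\in\bigcap_k B_k=\{z\}$, so $z$ is a fixed point. Thus in every case $B_0=\mathcal{B}(x,R_{\mu(x,Tx)})$ contains a fixed point or a minimal $T$-invariant ball; equivalently, one may simply transport Theorem \ref{KS} across the ultrametric identification of the first paragraph. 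I expect the main obstacle to be precisely the nonemptiness of $\bigcap_k B_k$: a bounded equivalence-relation ultrametric need not be spherically complete, so this step genuinely rests on the compact-structure condition $(r_6)$, and I would flag that it is needed alongside $(r_3)$–$(r_5),(r_8),(r_{10})$.
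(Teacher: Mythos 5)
Your proof is correct, but it does substantially more than the paper's, which consists of a single line: under these hypotheses $d(x,y)=2^{-\mu(x,y)}$ is an ultrametric (citing the proof of Jachymski's theorem in \cite{ref2}), the intended point being that the conclusion is then read off from Theorem \ref{KS} through the identification $\mathcal{B}(x,R_n)=B(x,2^{-n})$. Your first paragraph performs exactly this reduction, but instead of citing Theorem \ref{KS} you reprove it in the discrete language: the transitivity-chain argument making each $B(y)=\mathcal{B}\left(y,R_{\mu(y,Ty)}\right)$ $T$-invariant with $\mu(\cdot,T\cdot)$ nondecreasing on it, the dichotomy ``displacement constant $\Rightarrow$ minimal invariant ball'', and the observation that integer radii force $m_k\to\infty$ along any infinite descent, so that the finite intersection property produces a one-point intersection which must be fixed. (Your countable descent with dependent choice replaces the Zorn's lemma argument of \cite{ref11}; both are fine.) What your version buys is visibility of exactly where each hypothesis acts, and that matters here, as the next point shows.

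Your closing flag is not a quibble: it identifies a genuine defect that your proof repairs and the paper's does not. Compact structure $(r_6)$ --- which by the paper's own remark after Theorem \ref{KS}, citing \cite{ref15}, is exactly spherical completeness of the induced ultrametric --- is not among the listed hypotheses $(r_3)$--$(r_5)$, $(r_8)$, $(r_{10})$, yet Theorem \ref{KS} assumes it and your intersection step genuinely uses it. Without it the statement is false. For instance, let $M$ be the set of finite binary strings, let
$R_n=\Delta\cup\{(x,y): |x|\ge n,\ |y|\ge n,\ x \text{ and } y \text{ agree in their first } n \text{ letters}\}$
(so $R_n=M\times M$ for $n\le 0$), and let $Tx$ be $x$ with the letter $1$ appended. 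Then all the listed hypotheses hold, but $T$ has no fixed point and no ball is minimal $T$-invariant, because $\mu(y,Ty)=|y|$ is nonconstant on every ball; correspondingly, the nested balls $\mathcal{B}(1^k,R_k)$ have empty total intersection, so $(r_6)$ fails. One further reading issue affects you and the paper equally: ``every $\mathcal{E}$-ball'' must be understood, as you do via your $B_0$, as ``every ball $\mathcal{B}\left(x,R_{\mu(x,Tx)}\right)$'' in the sense of Theorem \ref{KS}; taken literally the claim fails even in compact spaces, since for the swap of a two-point space a singleton ball about either point contains neither a fixed point nor a $T$-invariant ball.
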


\begin{proof}
With this hypothesis, $d$ is an ultrametric (see the proof of \cite[Theorem 3.1]{ref2}).
\end{proof}

\begin{definition}\label{Def12}
Let $M$ be a nonempty set and $(R_n)_{n\in\mathbb{Z}}$ a sequence of reflexive binary relations over $M$ such that conditions $(r_1)-(r_4)$ are satisfied. Let $T$ be a map from $M$ to itself. We will say that $T$ :
\begin{enumerate}
  \item[(a)] has the regular property if for all $x\in M$ with $x\neq Tx$ there exists $n_0\in\mathbb{N}$ such that 
\begin{equation}
  (T^nx,T^{n+1}x)\in R_{\mu(x,Tx)+n_0},\quad \forall n\geq n_0.
\end{equation}
 \item[(b)] is asymptotically regular if for all $x\in M$ with $x\neq Tx$ there exists $n_0\in\mathbb{N}$ such that 
\begin{equation}
  (T^nx,T^{n+1}x)\in R_{\mu(x,Tx)+n},\quad \forall n\geq n_0.
\end{equation}
\end{enumerate}
\end{definition}
Note that if $T$ is asymptotically regular at $x$ then it has regular property at $x$.
\begin{remark}
 Recall that in \cite{ref15}, the authors introduce the following notion. Let $(M,d)$ be a metric space. A self mapping $T$ is said to have the weak-regular property if 
$$\limsup_{n\rightarrow \infty}\ d(T^n(x),T^{n+1}(x))< d(x,T(x)),$$
for each $x$ in $M$ such that $x \neq T(x)$.

\medskip
Now, if $T$ has the regular property at $x$ then it has the weak-regular property at $x$ for the corresponding metric $d$. Indeed, there exists $n_0$ such that 
$$(T^nx,T^{n+1}x)\in R_{\mu(x,Tx)+n_0},$$
for all $n\geq n_0$. With the above, we have 
$$(T^nx,T^{n+1}x)\in R_{\mu(x,Tx)+n_0}\Leftrightarrow d(T^nx,T^{n+1}x)\leq \frac{1}{2^{\mu(x,Tx)+n_0}}$$
and since $\frac{1}{2^{\mu(x,Tx)}}=d(x,Tx)$ we get 
$$d(T^nx,T^{n+1}x)\leq \frac{1}{2} d(x,Tx).$$
Thus, $\limsup_{n\rightarrow \infty}\ d(T^n(x),T^{n+1}(x))< d(x,T(x)).$
\end{remark}

\begin{remark}
The asymptically regular of $T$ in the sens of definition \ref{Def12} is equivalent to the classical asymptically regular property for the corresponding metric $d$. Recall that $T$ is asymptotically regular at $x$ if and only if

$$\lim_{n\to\infty} d(T^nx, T^{n+1} x)= 0.$$
\end{remark}

Now we can give fixed point results for ultrametric spaces which are extention of Theorem 17 and Theorem 15 in \cite{ref15} respectively.

\begin{theorem}
  Let $M$ be a nonempty set, $(R_n)_{n\in\mathbb{Z}}$ a sequence of equivalence relations over $M$ and $T$ a self-map of $M$ with the regular property such that $(r_3)-(r_5)$ and $(r_8),(r_{10})$ hold. Then $T$ has a fixed point in any $T$-invariant $\mathcal{E}$-ball. 
\end{theorem}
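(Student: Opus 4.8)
The plan is to combine the dichotomy just established (the $\mathcal{E}$-analogue of Theorem \ref{KS}) with the regular property in order to discard the minimal-ball alternative. Since the relations $R_n$ are equivalence relations, the associated $\delta$ is an ultrametric and the preceding theorem applies verbatim under the present hypotheses. Fixing a $T$-invariant $\mathcal{E}$-ball $B$, that theorem yields the alternative: $B$ contains either a fixed point of $T$, in which case we are done, or a minimal $T$-invariant ball sitting inside $B$. It therefore suffices to exclude the second possibility.

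So I would argue by contradiction, assuming $B'=\mathcal{B}(z,R_m)\subseteq B$ is minimal $T$-invariant, meaning $T(B')\subseteq B'$ and $(w,Tw)\in R_m\setminus R_{m+1}$ for every $w\in B'$. Choosing any $w\in B'$, condition $(r_4)$ forces $w\neq Tw$, for otherwise $(w,Tw)\in\bigcap_k R_k=\Delta\subseteq R_{m+1}$, contradicting $(w,Tw)\notin R_{m+1}$; moreover $(w,Tw)\in R_m\setminus R_{m+1}$ gives $\mu(w,Tw)=m$. Because $T(B')\subseteq B'$, the whole orbit of $w$ remains inside $B'$, so $(T^k w,T^{k+1}w)\in R_m\setminus R_{m+1}$ for every $k$.

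Next I would invoke the regular property at $w$: there is $n_0\geq 1$ with $(T^k w,T^{k+1}w)\in R_{\mu(w,Tw)+n_0}=R_{m+n_0}$ for all $k\geq n_0$. For equivalence relations the hypothesis $(r_{10})$ collapses to the nesting $R_n\subseteq R_{n-1}$, and iterating this with $n_0\geq 1$ yields $R_{m+n_0}\subseteq R_{m+1}$. Hence $(T^{n_0}w,T^{n_0+1}w)\in R_{m+1}$, which contradicts the minimality clause $(T^{n_0}w,T^{n_0+1}w)\notin R_{m+1}$. Thus no minimal $T$-invariant ball can lie inside $B$, and the dichotomy forces $B$ to contain a fixed point of $T$.

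The step I expect to be delicate is pinning down that the regular property genuinely supplies $n_0\geq 1$ rather than merely $n_0\geq 0$: were $n_0=0$ permitted, then $R_{m+n_0}=R_m$ and no contradiction would arise. This strict drop of one index is exactly the content of the weak-regular reformulation recorded in the remark where the regular property was shown to give $d(T^nx,T^{n+1}x)\leq\tfrac12\,d(x,Tx)$, and I would lean on that computation to close the gap.
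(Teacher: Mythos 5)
Your argument is the intended one, and as a derivation from the paper's preceding theorem (the relational form of Theorem \ref{KS}) it is correct. The paper itself writes no proof for this statement: it merely observes, via the remarks following Definition \ref{Def12}, that the hypotheses translate into those of Theorem 17 of \cite{ref15} for the induced ultrametric, and the proof of that theorem has exactly your structure --- apply the fixed-point/minimal-invariant-ball dichotomy, then use regularity to exclude the second alternative. Your handling of the two delicate points is sound: inside a minimal ball $\mathcal{B}(z,R_m)$ every $w$ satisfies $\mu(w,Tw)=m$ because reflexivity and $(r_{10})$ give the nesting $R_n\subseteq R_n^3\subseteq R_{n-1}$, and the contradiction does hinge on $n_0\geq 1$. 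On that last point you can argue more forcefully than by leaning on the remark: if $n_0=0$ were admitted in Definition \ref{Def12}(a), then every map satisfying $(r_8)$ would have the regular property, since $(x,Tx)\in R_{\mu(x,Tx)}$ propagates by induction to $(T^nx,T^{n+1}x)\in R_{\mu(x,Tx)}$ for all $n$; the hypothesis would then be vacuous and the theorem plainly false, so $n_0\geq 1$ is the only reading under which the statement has content (and it is what the paper's factor $\frac{1}{2}$ silently uses).

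There is, however, a genuine gap, which you inherit from the paper rather than create: the dichotomy you claim ``applies verbatim under the present hypotheses'' is itself unproved --- indeed false --- as stated, because no compactness hypothesis appears. Theorem \ref{KS} requires spherical completeness, which by \cite[Theorem 5]{ref15} is precisely compact structure $(r_6)$, and nothing in $(r_3)$--$(r_5)$, $(r_8)$, $(r_{10})$ supplies it. Without $(r_6)$ the present statement fails as well: delete the point $\infty$ from the paper's closing example, i.e., take $M=\mathbb{N}$, $R_n=\Delta\cup\{(i,j):\min\{i,j\}\geq n\}$ and $Ti=i+1$. These are nested equivalence relations satisfying $(r_3)$, $(r_4)$, $(r_8)$, $(r_{10})$; $M$ is bounded because $R_0=M\times M$; and $T$ is asymptotically regular, hence has the regular property. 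Yet $M=\mathcal{B}(0,R_0)$ is a $T$-invariant $\mathcal{E}$-ball containing no fixed point --- and, consistently, no minimal $T$-invariant ball either: every non-singleton ball has the form $\mathcal{B}(n,R_n)=\{j:j\geq n\}$ and contains points $y>n$ with $(y,Ty)\in R_{n+1}$, violating clause (ii), while the singleton balls are not $T$-invariant. So your proof, like the paper's citation of \cite{ref15}, becomes valid only once $(r_6)$ is added to the hypotheses; that the authors intended this is suggested by their closing example, which explicitly verifies compact structure --- it is exactly the added point $\infty$ that restores spherical completeness there.
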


\begin{theorem}
  Let $M$ be a nonempty set, $(R_n)_{n\in\mathbb{Z}}$ a sequence of equivalence relations over $M$ and $T$ a self-map of $M$ which is is asymptotically regular such that $(r_3)-(r_5)$ and $(r_8),(r_{10})$ hold. Then $T$ has a fixed point in any $T$-invariant $\mathcal{E}$-ball. 
\end{theorem}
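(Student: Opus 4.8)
The plan is to read the statement as the asymptotically regular counterpart of the preceding (regular property) theorem. I would first fix the dictionary between the relational data and the metric picture, then give two routes: a one-line reduction to that previous theorem, and a self-contained argument resting on the dichotomy result established just above.

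For the dictionary: since each $R_n$ is an equivalence relation, reflexivity and transitivity give $R_n \circ R_n = R_n$, hence $R_n^3 = R_n$, so $(r_{10})$ collapses to the nesting $R_n \subseteq R_{n-1}$. Combined with symmetry this makes $\delta(x,y) = 2^{-\mu(x,y)}$ an \emph{ultrametric} (as in the proof cited from \cite{ref2}), and by $(r_8)$ together with Theorem \ref{Thm1} the map $T$ is nonexpansive for $d=\delta$. As recorded in the remark preceding the statement, asymptotic regularity of $T$ is exactly the condition $d(T^n x, T^{n+1} x) \to 0$ for every $x$ with $x \neq Tx$.

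The short route is to note that asymptotic regularity implies the regular property: if $(T^n x, T^{n+1} x) \in R_{\mu(x,Tx)+n}$ for all $n \geq n_0$, then, using $\mu(x,Tx)+n \geq \mu(x,Tx)+n_0$ and the nesting, the same pairs lie in $R_{\mu(x,Tx)+n_0}$, which is precisely the defining inclusion of the regular property. Thus $T$ satisfies the hypotheses of the preceding theorem and a fixed point in any $T$-invariant $\mathcal{E}$-ball follows at once. Self-contained instead, let $B$ be a $T$-invariant $\mathcal{E}$-ball; by the dichotomy theorem it contains either a fixed point of $T$ (and we are done) or a minimal $T$-invariant ball $B' = \mathcal{B}(y, R_m)$, which I would exclude. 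Pick any $y_0 \in B'$; minimality gives $(z, Tz) \in R_m \setminus R_{m+1}$ for every $z \in B'$, so $y_0 \neq Ty_0$ and $\mu(y_0, Ty_0) = m$. Asymptotic regularity at $y_0$ then supplies $n_0$ with $(T^n y_0, T^{n+1} y_0) \in R_{m+n}$ for all $n \geq n_0$; but $T$-invariance keeps $T^n y_0 \in B'$, so minimality forces $(T^n y_0, T^{n+1} y_0) \notin R_{m+1}$, whereas for $n \geq \max(n_0, 1)$ the nesting $R_{m+n} \subseteq R_{m+1}$ yields $(T^n y_0, T^{n+1} y_0) \in R_{m+1}$, a contradiction. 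Hence no minimal $T$-invariant ball occurs and $B$ contains a fixed point.

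The argument is not hard once the dichotomy theorem is in hand; the one step demanding care is the index bookkeeping just performed, namely identifying the radius level $m$ of the minimal ball with $\mu(y_0, Ty_0)$ and checking that the exponent $m+n$ delivered by asymptotic regularity eventually drops strictly below the level $m+1$ at which minimality freezes the displacement $d(\cdot, T\cdot)$ at the constant value $2^{-m}$. Everything else is the translation already packaged in Theorem \ref{Thm1}.
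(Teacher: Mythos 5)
Your proposal is correct, and it in fact supplies more than the paper does: the paper states this theorem with no written proof at all, presenting it as the relational translation of Theorem 15 of \cite{ref15}, the translation being prepared by the surrounding remarks (the induced ultrametric $d=2^{-\mu(\cdot,\cdot)}$, and the equivalence of Definition \ref{Def12}(b) with classical asymptotic regularity for $d$). Your first, short route --- asymptotic regularity implies the regular property, because for equivalence relations $R_n^3=R_n$ turns $(r_{10})$ into the nesting $R_n\subseteq R_{n-1}$, whence $R_{\mu(x,Tx)+n}\subseteq R_{\mu(x,Tx)+n_0}$ for all $n\geq n_0$ --- is exactly the reduction the paper itself records in the sentence following Definition \ref{Def12}, and it derives the statement as a corollary of the preceding regular-property theorem. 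Your second, self-contained route is the more substantial contribution: it transplants the exclusion argument underlying the ultrametric results of \cite{ref15} into purely relational language, applying the paper's dichotomy theorem to the $T$-invariant ball $B$ and then ruling out a minimal $T$-invariant ball $B'=\mathcal{B}(y,R_m)\subseteq B$. Your bookkeeping there is right: condition (ii) of minimality together with $\Delta\subseteq R_{m+1}$ and nesting forces $z\neq Tz$ and $\mu(z,Tz)=m$ for every $z\in B'$, while $T$-invariance of $B'$ plus asymptotic regularity at $y_0$ place $(T^n y_0,T^{n+1}y_0)$ in $R_{m+n}\subseteq R_{m+1}$ for $n\geq\max(n_0,1)$, contradicting (ii).

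One caveat worth recording: each of your two routes rests on an earlier statement of the paper that is itself given essentially without proof (the regular-property theorem, respectively the relational dichotomy theorem), so your argument is exactly as strong as those statements. In particular, the paper's dichotomy theorem, as stated, carries no compactness hypothesis $(r_6)$ even though its metric model, Theorem \ref{KS}, requires spherical completeness; your verbatim application is legitimate against the paper as written, but if that hypothesis is in fact needed there, it would propagate to this theorem as well. That is an issue with the paper's statements, not with your reasoning.
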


\begin{example}
Let $M=\mathbb{R}\cup\{\infty\}$ and define for all $n\in\mathbb{Z}$ 
$$R_n=\{(x,y)\in M^2: \min\{x,y\}\geq n\}\cup\Delta.$$
Note that $\{R_n\}_{n\in\mathbb{Z}}$ is a family of an equivalence relations satisfying the conditions $(r_1)-(r_4)$. In addition, for all $x\in M$ and $n\in \mathbb{Z}$ we have
$$y\in B(x,R_n)\Leftrightarrow x=y\; \text{or}\; \min\{x,y\}\geq n.$$

\begin{itemize}
\item If $n>x$ then $B(x,R_n)=\{x\}.$
\item If $n\leq x$ then $B(x,R_n)=[n,\infty]$.
\end{itemize}

$\{R_n\}_{n\in\mathbb{Z}}$ has compact structure. Let $\mathcal{F}=\{B_i=B(x_i,R_{n_i})\}_{i\in I}$ be a family of balls of $M$. 
\begin{itemize}
\item[Case-1] If there exists $i_0\in I$ such that $n_{i_0}<x_{i_0}$, then $B_{i_0}=\{x_{i_0}\}$. 
\item[Case-2] Assume that $\forall i\in I$ we have $n_i\leq x_i$. Then, $B_i=[n_i,\infty]$, thus 
$$\infty \in \bigcap_{i\in I} B_i\neq \varnothing.$$

On the other hand, $\{R_n\}_{n\in\mathbb{Z}}$ does not have normal structure. Indeed, let $A$ be an intersection of balls not reduced to one point. Then, there exists $n\in A\cap\mathbb{Z}$ such that $A=[n,\infty]$. Thus, $R_m\in r_\mathcal{E}(A)$ if and only if $m\leq n$. Moreover, $A\times A\subset R_k$ if and only if $\forall x,y\in A$ we have $\min\{x,y\}\geq k$, then $k\leq n$. Hence, $r_\mathcal{E}(A)=\delta_\mathcal{E}(A)$.

In fact, $\{R_n\}_{n\in\mathbb{Z}}$ defines an ultrametric distance which implies derictely that $\{R_n\}_{n\in\mathbb{Z}}$ does not have normal structure. 
\end{itemize} 

Now define the mapping $T:M\rightarrow M$ to be 
$$
\begin{cases}
  Tx=x+1&\text{if}\; x\in \mathbb{R}\\
  T\infty=\infty&
\end{cases}
$$
 Then $T$ satisfies obviously $(r_8)$. In addition, $T$ is asymptotically regular, since $(x+n,x+n+1)\in R_ {\mu(x,x+1)+n}$ for all $x\in \mathbb{R}$. Hence $T$ has a fixed point in every $T$-invariant $\mathcal{E}$-ball. Note that every $T$-invariant $\mathcal{E}$-ball contains $\infty$.
\end{example}

\bibliographystyle{plain}

\begin{thebibliography}{10}

\bibitem{ref15}
M.~Bachar, M.~Bounkhel, and S.~Lazaiz.
\newblock Penot’s compactness property in ultrametric spaces with an
  application.
\newblock {\em Journal of Function Spaces}, 2021, 2021.

\bibitem{ref3}
M.~Berzig, C.~O. Rus, and M.~D. Rus.
\newblock An answer to an open problem of jachymski.
\newblock {\em Journal of Fixed Point Theory and Applications},
  19(4):2755--2761, 2017.

\bibitem{browder1965}
F.E. Browder.
\newblock Fixed-point theorems for noncompact mappings in hilbert space.
\newblock {\em Proceedings of the National Academy of Sciences of the United
  States of America}, 53(6):1272, 1965.

\bibitem{Adra}
A.~El~Adraoui, M.~Kabil, and S.~Lazaiz.
\newblock Generalized metric spaces and some related fixed point theorems.
\newblock {\em Fixed point theory}, 23(1):35--36, 2022.

\bibitem{infra}
Pierre Fraigniaud, Emmanuelle Lebhar, and Laurent Viennot.
\newblock The inframetric model for the internet.
\newblock In {\em IEEE INFOCOM 2008-The 27th Conference on Computer
  Communications}, pages 1085--1093. IEEE, 2008.

\bibitem{Gillepsie}
A.~A. Gillespie and B.~B. Williams.
\newblock Fixed point theorem for non-expansive mappings on {B}anach spaces
  with uniformly normal structure.
\newblock {\em Applicable Analysis}, 9(2):121--124, 1979.

\bibitem{gohde1965}
D.~G\"{o}hde.
\newblock Zum prinzip der kontraktiven abbildung.
\newblock {\em Mathematische Nachrichten}, 30(3-4):251--258, 1965.

\bibitem{granas2004}
A.~Granas and J.~Dugundji.
\newblock Fixed point theory.
\newblock {\em Bulletin of the American Mathematical Society}, 41(2):267--271,
  2004.

\bibitem{ref2}
J.~Jachymski.
\newblock A discrete fixed point theorem of eilenberg as a particular case of
  the contraction principle.
\newblock {\em Fixed Point Theory and Applications}, 2004(1):501092, 2004.

\bibitem{ref6}
M.~Jawhari, D.~Misane, and M.~Pouzet.
\newblock Retracts: graphs and ordered sets from the metric point of view.
\newblock {\em Contemp. Math}, 57:175--226, 1986.

\bibitem{kabil16}
M.~Kabil and M.~Pouzet.
\newblock Injective envelope of graphs and transition systems.
\newblock {\em Discrete mathematics}, 192(1-3):145--186, 1998.

\bibitem{kabil14}
M.~Kabil and M.~Pouzet.
\newblock Injective envelopes of transition systems and ferrers languages.
\newblock {\em RAIRO-Theor. Inf. Appl.}, 54, 2020.

\bibitem{ref7}
M.~Kabil, M.~Pouzet, and I.G. Rosenberg.
\newblock Free monoids and generalized metric spaces.
\newblock {\em European Journal of Combinatorics}, 80:339--360, 2019.

\bibitem{ref9}
J.~L. Kelley.
\newblock {\em General topology}.
\newblock Springer, 1975.

\bibitem{ref1}
M.~A. Khamsi and M.~Pouzet.
\newblock A fixed point theorem for commuting families of relational
  homomorphisms. applications to metric spaces, ordered sets and oriented
  graphs.
\newblock {\em Topology and its Applications}, 273:106970, 2020.

\bibitem{kirk1965}
W.~A. Kirk.
\newblock A fixed point theorem for mappings which do not increase distances.
\newblock {\em The American mathematical monthly}, 72(9):1004--1006, 1965.

\bibitem{ref11}
W.~A. Kirk and N.~Shahzad.
\newblock Some fixed point results in ultrametric spaces.
\newblock {\em Topology and its Applications}, 159(15):3327--3334, 2012.

\bibitem{ref4}
W.A. Kirk and B.~Kang.
\newblock A fixed point theorem revisited.
\newblock {\em Journal of the Korean Mathematical Society}, 34(2):285--291,
  1997.

\bibitem{ref13}
E.~Mayerhofer.
\newblock Spherical completeness of the non-archimedean ring of colombeau
  generalized numbers.
\newblock {\em arXiv preprint math/0604495}, 2006.

\bibitem{ref14}
A.~F. Monna.
\newblock {\em Analyse non-archim{\'e}dienne}, volume~56.
\newblock Springer, 2017.

\bibitem{ref10}
J.P. Penot.
\newblock Fixed point theorems without convexity.
\newblock {\em M{\'e}moires de la Soci{\'e}t{\'e} Math{\'e}matique de France},
  60:129--152, 1979.

\bibitem{ref12}
C.~Petalas and T.~Vidalis.
\newblock A fixed point theorem in non-archimedean vector spaces.
\newblock {\em Proceedings of the American Mathematical Society},
  118(3):819--821, 1993.

\bibitem{sine1979}
R.C. Sine.
\newblock On nonlinear contraction semigroups in sup norm spaces.
\newblock {\em Nonlinear Analysis: Theory, Methods \& Applications},
  3(6):885--890, 1979.

\bibitem{soardi1979}
P.~M. Soardi.
\newblock Existence of fixed points of nonexpansive mappings in certain banach
  lattices.
\newblock {\em Proceedings of the American Mathematical Society}, pages 25--29,
  1979.

\bibitem{tarski1955}
A.~Tarski.
\newblock A lattice-theoretical fixpoint theorem and its applications.
\newblock {\em Pacific journal of Mathematics}, 5(2):285--309, 1955.

\end{thebibliography}

\end{document}